\newcommand{\reals}{\mathbb{R}}
\newcommand{\complex}{\mathbb{C}}
\newcommand{\otimesc}{\otimes_{\complex}}
\newcommand{\paraa}[1]{\big(#1\big)}
\newcommand{\End}{\operatorname{End}}
\newcommand{\Aut}{\operatorname{Aut}}
\newcommand{\Id}{\operatorname{Id}}
\newcommand{\Der}{\operatorname{Der}}
\newcommand{\spacearound}[1]{\quad#1\quad}
\newcommand{\equivalent}{\spacearound{\Leftrightarrow}}
\renewcommand{\implies}{\spacearound{\Rightarrow}}
\newtheorem{theorem}{Theorem}[section]
\newtheorem{corollary}[theorem]{Corollary}
\newtheorem{lemma}[theorem]{Lemma}
\newtheorem{proposition}[theorem]{Proposition}
\newtheorem{example}[theorem]{Example}
\theoremstyle{definition}
\newtheorem{definition}[theorem]{Definition}
\theoremstyle{remark}
\numberwithin{equation}{section}
\renewcommand{\mid}{\mathds{1}}
\newcommand{\A}{\mathcal{A}}
\newcommand{\Gammat}{\tilde{\Gamma}}
\newcommand{\sigmatau}{(\sigma,\tau)}
\newcommand{\tausigma}{(\tau,\sigma)}
\newcommand{\sigmah}{\hat{\sigma}}
\newcommand{\tauh}{\hat{\tau}}
\newcommand{\TSigma}{T\Sigma}
\newcommand{\mat}{\mbox{Mat}_{N}(\mathbb{C})}
\newcommand{\matp}{\mbox{Mat}_{N}(\mathbb{C})p}
\newcommand{\curv}{\mbox{Curv}}
\newcommand{\deltah}{\hat{\delta}}
\newcommand{\Msigmatau}{M_{(\sigma, \tau)}}
\newcommand{\K}{\mathbb{K}}
\newcommand{\EndA}{\End(\A)}
\renewcommand{\Im}{\operatorname{Im}}
\title[Symmetric $(\sigma,\tau)$-algebras and $\sigmatau$-Hochschild cohomology]{Symmetric $(\sigma,\tau)$-algebras and\\ $\sigmatau$-Hochschild cohomology}
\author{Kwalombota Ilwale}
\address[Kwalombota Ilwale]{Dept. of Math.\\
	Link\"oping University\\
	581 83 Link\"oping\\
	Sweden}
\email{kwalombota.ilwale@liu.se}
\subjclass[2000]{}
\keywords{}
\begin{document}
	
\maketitle

\begin{abstract}
  On an associative algebra, we introduce the concept of symmetric
  $\sigmatau$-derivations together with a regularity condition and
  prove that strongly regular symmetric $\sigmatau$-derivations are
  inner. Symmetric $\sigmatau$-derivations are $\sigmatau$-derivations
  that are simultaneously $\sigmatau$-derivations as well as
  $\tausigma$-derivations, generalizing a property of commutative
  algebras.  Motivated by this notion, we explore the geometry of
  symmetric $\sigmatau$-algebras and prove that there exist a unique
  strongly regular symmetric $\sigmatau$-connection. Furthermore, we
  introduce $\sigmatau$-Hochschild cohomology and show that, in first
  degree, it describes the outer $\sigmatau$-derivations on an
  associative algebra. Along the way, examples are provided to
  illustrate the novel concepts.
\end{abstract}

\section{Introduction}

\noindent
In a derivation-based approach to noncommutative geometry, derivations
play a significant role, especially in describing connections,
curvature, and the torsion of a connection. Let $\A$ be a unital
  associative algebra over $\complex$. For a
  derivation $X$, i.e a $\complex$-linear map $X:\A\to \A$ satisfying
  Leibniz rule $X(fg) = fX(g) + X(f)g$, a connection
$\nabla_{X}: M\to M,$ on a left $\A$-module $M,$ satisfies Leibniz
rule
\begin{eqnarray*}
	\nabla_{X}(fm) = f\nabla_{X}(m) + X(f)m
\end{eqnarray*}
%for a derivation $X:\A\to \A,$ satisfying the Leibniz rule
%$X(fg) = fX(g) + X(f)g.$
(see e.g \cite{dv:calculDifferentiel},
\cite{dbm:central.bimodules}. On the other hand, there are
certain twisted derivations, called $\sigmatau$-derivations which are
usually not considered in the derivation-based approach but do play an
important role in certain deformations of Lie algebras as well as in
quantum groups. For example, in \cite{hls:sigmaderivation} and
\cite{ls:quasi-deformations}, a $q$-deformed Witt algebra was realized
through the use of $\sigmatau$-derivations and a family of
$q$-deformed Lie algebra of $\mathfrak{sl}_{2}(\mathbb{F})$ was
constructed using $\sigma$-derivations respectively.

A $\sigmatau$-derivation is a $\complex$-linear map $X:\A\to\A$ satisfying a
twisted Leibniz rule
\begin{eqnarray*}
	X(fg) = \sigma(f)X(g) + X(f)\tau(g),
\end{eqnarray*}
for algebra endomorphisms $\sigma, \tau : \A\to\A$.  In
  \cite{arnlind2022geometry}, a framework for noncommutative
  Riemmanian geometry is considered by constructing connections,
  curvature, and torsion over certain modules based on
  $\sigmatau$-derivations. In this framework, given a left
  $\A$-module $M$, a $\sigmatau$-connection $\nabla_{X}: M\to M,$
satisfies a twisted Leibniz rule
\begin{eqnarray}\label{twisted Leibniz of conn in intro}
	\nabla_{X}(fm) = \sigma(f)\nabla_{X}(m) + X(f)\tauh(m)
\end{eqnarray} 
for a $\sigmatau$-derivation $X: \A\to\A$ and a $\complex$-linear map
$ \tauh : M\to M$ (satisfying $\tauh(fm)=\tau(f)\tauh(m)$). For
instance, in the quest to explore connections on the quantum
$3$-sphere, \cite{ail:qdeformed} and \cite{ail:lc.spheres} constructed
connections satisfying \eqref{twisted Leibniz of conn in intro} with
respect to a set of $\sigmatau$-derivations $X_1,X_2,X_3$ appearing
naturally as deformations of the Lie algebra of vector fields on the
3-sphere.  These results motivated the construction of a general
framework for noncommutative geometry of twisted derivations developed
in \cite{arnlind2022geometry}, introducing the concept of a
$\sigmatau$-algebra together with a corresponding notion of
$\Sigma$-modules.

In the current work, we introduce the notion of symmetric
$\sigmatau$-derivations. A symmetric $\sigmatau$-derivation is a
$\sigmatau$-derivation that is also a $\tausigma$-derivation.  For
example, all $\sigmatau$-derivations on a commutative algebra are
symmetric $\sigmatau$-derivations. Based on such
$\sigmatau$-derivations, we introduce symmetric $\sigmatau$-algebras
and $\Sigma$-modules on which symmetric $\sigmatau$-connections are
constructed. To better understand inner and outer
$\sigmatau$-derivations, we introduce the concept of regular and
strongly regular pairs of endomorphisms.
This triggers interesting questions in related to Hochschild
cohomology. In degree one, the Hochschild cohomology group of an
associative algebra measures to what extent there exist outer
derivations of an associative algebra. For example, the first Hochschild
cohomology group of the matrix algebra $\mat$ in dimension one is zero,
implying that there are only inner derivations on a matrix
algebra. For $\sigmatau$-derivations on an associative algebra, we
introduce a $\sigmatau$-version of Hochschild cohomology for
describing the inner and outer $\sigmatau$-derivations on an
associative algebra.

In Section $2,$ we study symmetric $\sigmatau$-derivations on an
associative algebra and for example, prove that for a strongly regular
pair of endomorphisms, every symmetric $\sigmatau$-derivation is
inner. In Section 3, we define the $\sigmatau$-Hochschild cohomology
groups of an associative algebra in order to study inner and outer
$\sigmatau$-derivations. In Section $4,$ we study symmetric
$\sigmatau$-connections and show that for strongly regular
$\sigmatau$-algebras there exist a unique symmetric
$\sigmatau$-connection. In Section $5,$ we explore the curvature of a
symmetric $\sigmatau$-connection and its corresponding linearity
properties. Lastly in section $6,$ we illustrate the novel concepts
with examples based on commutative algebras.

\section{Symmetric $\sigmatau$-derivations}

\noindent
In this Section, we introduce a notion of symmetric
$\sigmatau$-derivations on associative algebras. These are
$\sigmatau$-derivations that are also $\tausigma$-derivations.  One
may think of symmetric $\sigmatau$-derivations as a generalization of
$\sigmatau$-derivations on commutative algebras (where every
  $\sigmatau$-derivation is symmetric) to noncommutative algebras. In
the following, we consider a unital associative algebra $\A$ over a
field $\K$ of characteristic zero. Let us recall some definitions.

\begin{definition}
  Let $\sigma,\tau\in\EndA$ and let $M$ be an $\A$-bimodule.  A
  $\K$-linear map $X:\A\to M$ is called a $\sigmatau$-derivation with
  values in $M$ if
  \begin{equation}
    X(fg) = \sigma(f)X(g) + X(f)\tau(g)
  \end{equation}
  for all $f, g \in \A.$
\end{definition}

\noindent
Let $\mbox{Der}_{\sigmatau}(\A)$ denote the set of
$\sigmatau$-derivations of $\A$. In general,
$\mbox{Der}_{\sigmatau}(\A)$ is not an $\A$-module, but it turns out
to be a module over $Z(\A)$, the center of $\A$.

\begin{proposition}
  $\mbox{Der}_{\sigmatau}(\A)$ is a $Z(\A)$-bimodule.
\end{proposition}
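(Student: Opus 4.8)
The plan is to exhibit explicit left and right actions of $Z(\A)$ on $\mbox{Der}_{\sigmatau}(\A)$, verify that they again produce $\sigmatau$-derivations, and then check the bimodule axioms, which are routine. Throughout I take $\mbox{Der}_{\sigmatau}(\A)$ to mean $\sigmatau$-derivations valued in $M=\A$.

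First I would record that $\mbox{Der}_{\sigmatau}(\A)$ is a $\K$-vector space: since the twisted Leibniz rule $X(fg)=\sigma(f)X(g)+X(f)\tau(g)$ is $\K$-linear in $X$, the sum $X+Y$ and scalar multiple $\lambda X$ of $\sigmatau$-derivations are again $\sigmatau$-derivations. Next, for $z\in Z(\A)$ and $X\in\mbox{Der}_{\sigmatau}(\A)$, I would define the actions pointwise by
\begin{equation*}
  (z\cdot X)(f) = zX(f) \qquad\text{and}\qquad (X\cdot z)(f) = X(f)z.
\end{equation*}
The content of the proposition is that these stay inside $\mbox{Der}_{\sigmatau}(\A)$. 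For the left action, I would compute
\begin{equation*}
  (z\cdot X)(fg) = zX(fg) = z\sigma(f)X(g) + zX(f)\tau(g)
  = \sigma(f)\paraa{zX(g)} + \paraa{zX(f)}\tau(g),
\end{equation*}
where the last step uses that $z$ commutes with $\sigma(f)\in\A$; this exhibits $z\cdot X$ as a $\sigmatau$-derivation. Symmetrically, for the right action I would use that $z$ commutes with $\tau(g)$ to write $X(fg)z = \sigma(f)X(g)z + X(f)z\tau(g)$, which is again of the required form.

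The main (and essentially only) obstacle is precisely this closure step, and it is resolved exactly by centrality: the verification requires $z\sigma(f)=\sigma(f)z$ for the left action and $\tau(g)z=z\tau(g)$ for the right action, both of which hold because $z\in Z(\A)$ and $\sigma(f),\tau(g)\in\A$. This is also what explains the remark preceding the statement that $\mbox{Der}_{\sigmatau}(\A)$ fails to be an $\A$-module for a general multiplier: an arbitrary $a\in\A$ need not commute with the images of $\sigma$ and $\tau$, so $a\cdot X$ would pick up obstruction terms and leave the space of $\sigmatau$-derivations.

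Finally, the bimodule axioms follow immediately from associativity of the product in $\A$ together with the pointwise definitions: $\paraa{(z_1z_2)\cdot X}(f)=z_1z_2X(f)=\paraa{z_1\cdot(z_2\cdot X)}(f)$ and analogously on the right, the left and right actions commute since $\paraa{(z_1\cdot X)\cdot z_2}(f)=z_1X(f)z_2=\paraa{z_1\cdot(X\cdot z_2)}(f)$, distributivity in both $z$ and $X$ is inherited from distributivity in $\A$, and $1\cdot X = X = X\cdot 1$ since $\A$ is unital. This completes the verification that $\mbox{Der}_{\sigmatau}(\A)$ is a $Z(\A)$-bimodule.
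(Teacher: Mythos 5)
Your proof is correct and follows essentially the same route as the paper: define the left and right actions pointwise, and observe that centrality of $z$ lets you slide it past $\sigma(f)$ (respectively $\tau(g)$) to verify closure under the twisted Leibniz rule. Your version is somewhat more complete, since you also spell out the right-action computation and the bimodule axioms, which the paper leaves as ``similarly'' and ``it is clear.''
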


\begin{proof}
  Assume $X$ is a $\sigmatau$-derivation and let $f\in Z(\A).$ Set
  \begin{align*}
    (f\cdot X)(a) = fX(a)\quad\text{and}\quad
    (X\cdot f)(a) = X(a)f
  \end{align*}
  for $a\in\A$. It is clear that the above action defines a bimodule
  structure on $\mbox{Der}_{\sigmatau}(\A)$ as long as $f\cdot X$ and
  $X\cdot f$ are $\sigmatau$-derivations.  For $a, b \in \A,$ one has
  \begin{eqnarray*}
    &&f\cdot X(ab) = fX(ab) = f\sigma(a)X(b) + fX(a)\tau(b) = \sigma(a)fX(b) + fX(a)\tau(b)\\
    && \sigma(a)f\cdot X(b) + f\cdot X(a)\tau(b)
  \end{eqnarray*}
  showing that $f\cdot X$ is a $\sigmatau$-derivation. Similarly, one
  shows that $X\cdot f$ is a $\sigmatau$-derivation.
\end{proof}

\noindent
In the definition of $\sigmatau$-derivation, one obtains an
ordinary derivation (with values in $M$) when both $\sigma$ and $\tau$
are the identity maps on $\A$. A derivation $X$ is inner if it can be
written as $X(f)=[f,g_0]=m_0f-fm_0$ for some fixed $m_0\in M$.  These
usually give an idea of how noncommutative an algebra is. In the case
of $\sigmatau$-derivations, one defines inner derivations as follows.

\begin{definition}
  A $\sigmatau$-derivation $X$ with values in $M$ is called \emph{inner} if there exists
  $m_{0}\in M$ such that
  \begin{equation*}\label{inn sigmatau}
    X(f) = m_{0}\tau(f) - \sigma(f)m_{0}
  \end{equation*}
  for all $f \in \A.$ 
\end{definition}

\noindent
Note that although there are no inner derivations on a commutative
algebra, in general there exist inner (nontrivial)
$\sigmatau$-derivations on a commutative algebra if $\sigma\neq\tau$.

\begin{definition}
  A $\sigmatau$-derivation with values in $M$ is called
  \emph{symmetric} if it is also a $\tausigma$-derivation with values in $M.$
\end{definition}

\noindent
Let us look at a few examples of symmetric
$\sigmatau$-derivations. For arbitrary $\sigma,\tau\in\EndA$ the
  linear map $X:\A\to\A$ defined by
\begin{eqnarray}\label{ex sym inner sigmatau der}
  X(f) = \tau(f) - \sigma(f)
\end{eqnarray}
for $f \in \A$ is a symmetric $\sigmatau$-derivation. Indeed,
for $f, g \in \A,$ one has
\begin{eqnarray*}
  &&	X(fg) = \tau(f)\tau(g) - \sigma(f)\sigma(g) = \tau(f)\tau(g) - \sigma(f)\tau(g) + \sigma(f)\tau(g) -  \sigma(f)\sigma(g)\\
  &&= (\tau(f) - \sigma(f))\tau(g) + \sigma(f)(\tau(g) - \sigma(g)) = X(f)\tau(g) + \sigma(f)X(g)
\end{eqnarray*}
showing that $X$ is a $\sigmatau$-derivation. On the other hand, for
$f, g\in \A$ one has
\begin{eqnarray*}
  &&X(fg) = \tau(f)\tau(g) - \sigma(f)\sigma(g) = \tau(f)\tau(g) - \tau(f)\sigma(g) + \tau(f)\sigma(g) - \sigma(f)\sigma(g)\\
  && = \tau(f)(\tau(g) - \sigma(g)) + (\tau(f) - \sigma(f))\sigma(g) = \tau(f)X(g) + X(f)\sigma(g)
\end{eqnarray*}
showing that $X$ is also a $\tausigma$-derivation. Hence, $X$ is a
symmetric $\sigmatau$-derivation. The derivation given by \eqref{ex
  sym inner sigmatau der} is an inner $\sigmatau$-derivation which is
symmetric. Another example of a $\sigmatau$-derivation is the Jackson
derivative $D_{q}$ on the polynomial algebra $\complex[x].$ The
Jackson derivative is defined as
\begin{equation}\label{Jackson der}
  D_{q}(f(x)) = \dfrac{f(qx) - f(x)}{(q - 1)x}
\end{equation}
for $f(x) \in \complex[x]$ and $q \in \complex$ with $q\neq 1.$ For
$f(x), g(x) \in \complex[x],$ one has
\begin{eqnarray*}
  D_{q}(f(x)g(x)) = f(qx)D_{q}(g(x)) + D_{q}(f(x))g(x) = D_{q}(f(x))g(qx) + f(x)D_{q}(g(x)).
\end{eqnarray*}
With endomorphisms $\sigma(f)(x) = f(qx)$ and $\tau(f)(x) = f(x),$ the
Jackson derivative $D_{q}$ is a symmetric $\sigmatau$-derivation. This
an example of $\sigmatau$-derivation on the polynomial algebra
$\complex[x]$ which is not inner (see also Section~\ref{sec:ex.sigmatau.cohomology}).

On an associative algebra $\A,$ one can generalize \eqref{ex sym inner
  sigmatau der} to construct symmetric inner $\sigmatau$-derivations with values in a bimodule.
\begin{proposition}
  Let $\sigma,\tau\in\EndA$ and let $M$ be an $\A$-bimodule. If
  $m_{0}\in M$ such that $[m_{0}, \tau(f)] = [m_{0}, \sigma(f)] = 0$
  for all $f \in \A$ then $X:\A\to M$ given by
  \begin{eqnarray*}
    X(f) = m_{0}\tau(f) - \sigma(f)m_{0}
  \end{eqnarray*}
  for $f\in\A$, is a symmetric inner $\sigmatau$-derivation with values in $M$.
\end{proposition}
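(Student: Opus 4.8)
The plan is to verify directly that the given map $X(f) = m_0\tau(f) - \sigma(f)m_0$ satisfies both the $\sigmatau$-Leibniz rule and the $\tausigma$-Leibniz rule, using the centrality hypotheses $[m_0,\tau(f)] = [m_0,\sigma(f)] = 0$ to shuffle $m_0$ past $\sigma$- and $\tau$-images as needed. This is essentially the same telescoping computation that was just carried out for the map in \eqref{ex sym inner sigmatau der}, now performed in the bimodule $M$ and with $m_0$ inserted; the commutativity assumptions are exactly what is needed to make the earlier argument go through verbatim.

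First I would expand $X(fg) = m_0\tau(fg) - \sigma(fg)m_0 = m_0\tau(f)\tau(g) - \sigma(f)\sigma(g)m_0$, using that $\sigma,\tau$ are algebra endomorphisms. Then I would add and subtract a cross term to telescope. For the $\sigmatau$-derivation property I would insert $\pm\sigma(f)m_0\tau(g)$, writing
\begin{align*}
  X(fg) &= m_0\tau(f)\tau(g) - \sigma(f)m_0\tau(g) + \sigma(f)m_0\tau(g) - \sigma(f)\sigma(g)m_0.
\end{align*}
The first pair equals $\big(m_0\tau(f) - \sigma(f)m_0\big)\tau(g) = X(f)\tau(g)$, and the second pair equals $\sigma(f)\big(m_0\tau(g) - \sigma(g)m_0\big) = \sigma(f)X(g)$, where in the second pair I use $[m_0,\sigma(g)]=0$ to rewrite $\sigma(f)m_0\tau(g) - \sigma(f)\sigma(g)m_0 = \sigma(f)\big(m_0\tau(g) - \sigma(g)m_0\big)$. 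This gives $X(fg) = \sigma(f)X(g) + X(f)\tau(g)$.

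For the $\tausigma$-property I would instead insert $\pm\tau(f)m_0\sigma(g)$ (or equivalently $\pm\tau(f)\sigma(g)m_0$, moving $m_0$ through using centrality). Telescoping the same expression $m_0\tau(f)\tau(g) - \sigma(f)\sigma(g)m_0$ this way yields $\tau(f)X(g) + X(f)\sigma(g)$, exactly mirroring the second computation done for \eqref{ex sym inner sigmatau der}. Since $X$ is then both a $\sigmatau$- and a $\tausigma$-derivation and has the inner form of the preceding definition, it is a symmetric inner $\sigmatau$-derivation with values in $M$.

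There is no real obstacle here: the only subtlety is keeping track of which side $m_0$ sits on inside $M$ as a bimodule, and the hypotheses $[m_0,\sigma(f)]=[m_0,\tau(f)]=0$ are precisely calibrated to let $m_0$ commute past every $\sigma$- and $\tau$-image that arises, so the telescoping splits cleanly in both orderings. The mildly delicate point worth stating explicitly is that the bracket conditions must hold for all $f\in\A$, so that they apply to the arguments $\sigma(g)$ and $\tau(g)$ appearing after expanding the endomorphisms; this is what guarantees the cross terms recombine into $X(f)$ and $X(g)$ rather than leaving uncancelled commutators.
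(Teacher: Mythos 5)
Your proposal is correct and follows essentially the same route as the paper: both verify the $\sigmatau$-Leibniz rule directly from the inner form (which in fact needs no centrality at all, only distributivity) and then use $[m_0,\sigma(f)]=[m_0,\tau(f)]=0$ to telescope the $\tausigma$-Leibniz rule, the paper merely preferring to first rewrite $X(f)=\tau(f)m_0-m_0\sigma(f)$ before adding and subtracting the cross term. The only cosmetic difference is your unnecessary invocation of $[m_0,\sigma(g)]=0$ in the first verification, which is harmless.
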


\begin{proof}
  By definition, $X$ is an inner $\sigmatau$-derivation.  We want to
  show that $X$ is also $\tausigma$-derivation.
  Since $[m_{0}, f] = 0 = [m_{0}, \sigma(f)]$
  for all $f \in \A,$ one has
  \begin{eqnarray*}
    X(f) = m_{0}\tau(f) - \sigma(f)m_{0} = \tau(f)m_{0} - m_{0}\sigma(f).
  \end{eqnarray*}
  For $f, g \in \A,$ one has
  \begin{eqnarray*}
    && X(fg) = \tau(f)\tau(g)m_{0} - \tau(f)m_{0}\sigma(g) + \tau(f)m_{0}\sigma(g) - m_{0}\sigma(f)\sigma(g)\\
    && = \tau(f)(\tau(g)m_{0} - m_{0}\sigma(g)) + (\tau(f)m_{0} - m_{0}\sigma(f))\sigma(g)\\
    && = \tau(f)X(g) + X(f)\sigma(g),
  \end{eqnarray*}
  showing that $X$ is a $\tausigma$-derivation. Since we have shown
  that $X$ is both $\sigmatau$-derivation and $\tausigma$-derivation,
  it is symmetric.
\end{proof}

Let us now introduce a notion of regularity for pairs of algebra
  endomorphisms. Given $\sigma,\tau\in\End(\A)$ we write
\begin{align}
  \delta(f) = \tau(f)-\sigma(f)
\end{align}
for $f\in\A$. In what follows, the properties of $\delta$ will be
important, and to this end we introduce the concept of regularity.
\begin{definition}
  Let $\sigma, \tau\in\End(\A)$. The pair $\sigmatau$ is
  called \emph{regular} if there exists $f\in \A$ such that
  $\delta(f) = \tau(f) - \sigma(f)$ is not a zero divisor. Moreover, the pair
  $\sigmatau$ is called \emph{strongly regular} if there exists
  $f\in\A$ such that $\delta(f)$ is invertible.
\end{definition}
\noindent
Let us now look at some examples of regular and strongly regular pairs
of endomorphisms. Let us take, for example, $\A = \complex[x],$ the
polynomial algebra in a variable $x.$ Since there are no zero divisors
in $\complex[x],$ any pair $\sigmatau$ such that $\sigma \neq \tau$ is
a regular pair of endomorphisms. Now, let $\sigma(x) = qx$ and
$\tau(x) = x$ for $q \in \complex, q \neq 1$. Since $\sigma \neq \tau$
the pair $\sigmatau$ is regular. Moreover, for a polynomial
$p(x) = \sum_{i\geq 0}a_{i}x^{i} \in \complex[x],$ \begin{eqnarray*}
  \delta(p)(x) = \sum_{i\geq 1}(1 - q^{i})a_{i}x^{i}
\end{eqnarray*}
is not invertible in $\complex[x].$ Thus the pair $\sigmatau$ such
that $\sigma(x) = qx$ and $\tau(x) = x,$ is not a strongly regular
pair of endomorphisms of $\complex[x].$ On the other hand, given
$\sigma(x) = x - h$ and $\tau(x) = x,$ for $h\in\complex$ ($h\neq 0$), then
\begin{eqnarray*}
	\delta(x) = h
\end{eqnarray*}
is invertible in $\complex[x],$ implying that the pair $\sigmatau$
such that $\sigma(x) = x - h$ and $\tau(x) = x$ is a strongly regular
pair of endomorphisms over $\complex[x].$

In the following Lemma, we give a property of symmetric
$\sigmatau$-derivations that motivates a relationship between regular
pairs of endomorphisms and symmetric $\sigmatau$-derivations.
\begin{lemma}\label{lemma:Xd.dX}
	If $X$ is a symmetric $\sigmatau$-derivation with values in the $\A$-bimodule $M$, then
	\begin{equation}
		X(f)\delta(g) = \delta(f)X(g)
	\end{equation}
	for all $f,g\in\A$, where $\delta = \tau-\sigma$.
\end{lemma}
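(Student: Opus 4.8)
The plan is to exploit the two Leibniz rules that $X$ satisfies simultaneously. Since $X$ is a symmetric $\sigmatau$-derivation, it is both a $\sigmatau$-derivation and a $\tausigma$-derivation, so for all $f,g\in\A$ one has the two expressions
\begin{equation*}
  X(fg) = \sigma(f)X(g) + X(f)\tau(g)
  \mathand
  X(fg) = \tau(f)X(g) + X(f)\sigma(g).
\end{equation*}
First I would equate the right-hand sides, since both compute $X(fg)$, obtaining
\begin{equation*}
  \sigma(f)X(g) + X(f)\tau(g) = \tau(f)X(g) + X(f)\sigma(g).
\end{equation*}

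Next I would collect on one side the terms in which $X(f)$ is acted on the right and on the other side the terms in which $X(g)$ is acted on the left. Moving $\sigma(f)X(g)$ and $X(f)\sigma(g)$ to the opposite sides gives
\begin{equation*}
  X(f)\tau(g) - X(f)\sigma(g) = \tau(f)X(g) - \sigma(f)X(g),
\end{equation*}
and factoring $X(f)$ out on the right of the left-hand side and $X(g)$ out on the left of the right-hand side yields
\begin{equation*}
  X(f)\big(\tau(g)-\sigma(g)\big) = \big(\tau(f)-\sigma(f)\big)X(g),
\end{equation*}
which is precisely $X(f)\delta(g) = \delta(f)X(g)$ by the definition $\delta = \tau - \sigma$.

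There is no serious obstacle here; the identity follows by a purely formal manipulation. The only point requiring care is that $M$ is merely an $\A$-bimodule, so the two factorizations must respect the left and right module actions separately. Fortunately every monomial pairs naturally — $X(f)$ and $X(g)$ always sit on the side dictated by the respective Leibniz rule — so the bimodule structure causes no difficulty, and one never needs to commute a module element past an algebra element.
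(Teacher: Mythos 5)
Your argument is correct and is essentially identical to the paper's proof: the paper also subtracts the $\sigmatau$-Leibniz expansion of $X(fg)$ from its $\tausigma$-Leibniz expansion and rearranges to obtain $\delta(f)X(g) - X(f)\delta(g) = 0$. Your remark about respecting the left and right bimodule actions is accurate and the factorizations you perform are valid.
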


\begin{proof}
	Assume that $X$ is a symmetric $\sigmatau$-derivation. For $f, g \in \A,$ one has
	\begin{eqnarray*}
		&&0 = X(fg) - \sigma(f)X(g) - X(f)\tau(g)\\
		&&= \tau(f)X(g) + X(f)\sigma(g) - \sigma(f)X(g) - X(f)\tau(g)\\
		&& = (\tau(f) - \sigma(f))X(g) + X(f)(\sigma(g) - \tau(g))\\
		&&= \delta(f)X(g) - X(f)\delta(g),
	\end{eqnarray*}
	implying that 
	\begin{equation*}
		X(f)\delta(g) = \delta(f)X(g).\qedhere
	\end{equation*}
\end{proof}
\noindent
\begin{proposition} \label{str reg symm der} Let $\sigmatau$ be a
  strongly regular pair of endomorphisms of $\A$ and let $X$ be a
  symmetric $\sigmatau$-derivation with values in the $\A$-bimodule
  $M$. Then there exists a unique $m_{0} \in M$ such that
  \begin{eqnarray*}
    X(f) = m_{0}\tau(f) - \sigma(f)m_{0}
  \end{eqnarray*}
  and 
  \begin{eqnarray*}
    [m_{0}, \tau(f)] = [m_{0}, \sigma(f)] = 0
  \end{eqnarray*}
  for all $f \in \A$.
\end{proposition}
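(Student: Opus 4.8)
The plan is to construct the element $m_0$ explicitly from a single invertible value of $\delta$ and then to extract the two commutation relations by feeding the candidate formula back into the two Leibniz rules that $X$ satisfies. First I would use strong regularity to fix an element $f_0\in\A$ with $\delta(f_0)$ invertible and set $m_0 = \delta(f_0)^{-1}X(f_0)\in M$. Applying Lemma~\ref{lemma:Xd.dX} with the first argument equal to $f_0$ gives $X(f_0)\delta(g)=\delta(f_0)X(g)$, and multiplying on the left by $\delta(f_0)^{-1}$ yields the clean formula $X(f)=m_0\delta(f)$ for all $f\in\A$. This already shows that $X$ has the desired shape up to the commutation of $m_0$ with the images of $\sigma$ and $\tau$, since $m_0\delta(f)=m_0\tau(f)-m_0\sigma(f)$.

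The heart of the argument is establishing $[m_0,\sigma(f)]=[m_0,\tau(f)]=0$, and this is where I expect the main work to lie. Substituting $X=m_0\delta$ into the $\sigmatau$-Leibniz rule $X(fg)=\sigma(f)X(g)+X(f)\tau(g)$, and using that $\sigma,\tau$ are algebra endomorphisms so that $\delta(fg)=\tau(f)\tau(g)-\sigma(f)\sigma(g)$, I would expand both sides. After cancelling the common term $m_0\tau(f)\tau(g)$, the identity collapses to $[m_0,\sigma(f)]\delta(g)=0$ for all $f,g$. Choosing $g=f_0$ and cancelling the invertible $\delta(f_0)$ then forces $[m_0,\sigma(f)]=0$. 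Running the identical computation with the $\tausigma$-Leibniz rule $X(fg)=\tau(f)X(g)+X(f)\sigma(g)$, which is available precisely because $X$ is symmetric, produces $[m_0,\tau(f)]\delta(g)=0$ and hence $[m_0,\tau(f)]=0$. Thus the symmetry hypothesis is exactly what is needed to obtain the second commutator.

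With both commutators in hand the formula for $X$ is immediate: $X(f)=m_0\delta(f)=m_0\tau(f)-m_0\sigma(f)=m_0\tau(f)-\sigma(f)m_0$, using $[m_0,\sigma(f)]=0$ in the last step. For uniqueness I would note that any $m_1$ satisfying the two stated conditions also commutes with $\sigma(f)$, so $X(f)=m_1\tau(f)-\sigma(f)m_1=m_1\delta(f)$; comparing with $X(f)=m_0\delta(f)$ gives $(m_0-m_1)\delta(f)=0$ for all $f$, and evaluating at $f=f_0$ and cancelling the invertible $\delta(f_0)$ yields $m_0=m_1$. The only place strong regularity (rather than mere regularity) is used is in these cancellations of $\delta(f_0)$ on the right, both to kill the commutators and to pin down $m_0$ uniquely.
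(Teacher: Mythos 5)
Your proof is correct and follows essentially the same route as the paper: apply Lemma~\ref{lemma:Xd.dX} at an invertible value of $\delta$ to get $X=m_0\delta$, feed this back into the two Leibniz rules to obtain $[m_0,\sigma(f)]\delta(g)=[m_0,\tau(f)]\delta(g)=0$, and cancel $\delta(f_0)$ for both the commutators and uniqueness. The only (immaterial) difference is a left/right mirror convention — you take $m_0=\delta(f_0)^{-1}X(f_0)$ and write $X(f)=m_0\delta(f)$, whereas the paper takes $m_0=X(g_0)\delta(g_0)^{-1}$ and writes $X(f)=\delta(f)m_0$.
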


\begin{proof}
  Since $X$ is a symmetric $\sigmatau$-derivation, one concludes from
  Lemma~\ref{lemma:Xd.dX} that
  \begin{align}\label{eq:XddX}
    X(f)\delta(g) = \delta(f)X(g)
  \end{align}
  for all $f,g\in\A$. Since $\sigmatau$ is assumed to be a strongly
  regular pair of endomorphisms, there exists $g_0\in\A$ such that
  $\delta(g_0)$ is invertible. It then follows from \eqref{eq:XddX}
  that
  \begin{equation}\label{eq:X.in.terms.of.delta}
    X(f) = \delta(f)X(g_0)\delta(g_0)^{-1}
  \end{equation}
  for all $f\in\A$. Set $m_0=X(g_0)\delta(g_0)^{-1}$, use
  \eqref{eq:X.in.terms.of.delta} to compute
  \begin{align*}
    0 &= X(fg)-\sigma(f)X(g)-X(f)\tau(g)
        = \delta(fg)m_0-\sigma(f)\delta(g)m_0-\delta(f)m_0\tau(g)\\
      &=\paraa{\tau(f)\tau(g)-\sigma(f)\sigma(g)}m_0-\sigma(f)\paraa{\tau(g)-\sigma(g)}m_0
        -\paraa{\tau(f)-\sigma(f)}m_0\tau(g) \\
      &= \tau(f)\tau(g)m_0-\sigma(f)\tau(g)m_0-\tau(f)m_0\tau(g)+\sigma(f)m_0\tau(g)\\
      &= \tau(f)[\tau(g),m_0]-\sigma(f)[\tau(g),m_0] = \delta(f)[\tau(g),m_0],
  \end{align*}
  and setting $f=g_0$, implying that $\delta(g_0)$ is invertible,
  gives $[\tau(g),m_0]=0$ for all $g\in\A$. Similarly, the identity
  \begin{align*}
    0 &= X(fg)-\tau(f)X(g)-X(f)\sigma(g)
  \end{align*}
  implies that $[\sigma(g),m_0]=0$ for all $g\in\A$. It follows that
  \begin{align*}
    X(f) = \delta(f)m_0 = m_0\delta(f) = m_0\tau(f) - \sigma(f)m_0.
  \end{align*}
  Finally, let us prove that $m_0$ is unique. To this end, assume that
  \begin{align*}
    X(f)=\delta(f)m_0=\delta(f)n_0 
  \end{align*}
  for all $f\in\A$, implying that $\delta(f)(m_0-n_0)=0$ for all
  $f\in\A$. Setting $f=g_0$ gives $m_0=n_0$ since $\delta(g_0)$ is
  invertible.
\end{proof}

\begin{corollary}\label{cor:strongly.regular.inner}
  If $\sigmatau$ is a strongly regular pair of endomorphisms of $\A$ then
  every symmetric $\sigmatau$-derivation with values in an $\A$-bimodule $M$ is inner.
\end{corollary}

\section{$\sigmatau$-Hochschild Cohomology}

\noindent
The notion of Hochschild cohomology was first introduced in
\cite{hochschild}. Hochschild cohomology theory in low dimension is
concerned with derivations of associative algebras. For instance, the
first cohomology group determines the outer derivations of the
algebra.
In this Section, we introduce a $\sigmatau$-version of Hochschild
cohomology, which we refer to as $\sigmatau$-Hochschild
cohomology. The $\sigmatau$-Hochschild cohomology will be defined as
the Hochschild cohomology with coefficients in a particular twisted
bimodule, which we denote as $\Msigmatau.$ In this case of the
Hochschild cohomology, $1$-cocycles are $\sigmatau$-derivations and
$1$-coboundaries are inner $\sigmatau$-derivations. Therefore, first
$\sigmatau$-Hochschild cohomology group, which we denote by
$H^{1}_{\sigmatau}(\A, M)$, determines the outer
$\sigmatau$-derivations. Let us first look at the preliminaries before
we go into details.

\subsection{Hochschild cohomology preliminaries}
Let
$\A^{\otimes n} =
\A\otimes_{\complex}\cdots\otimes_{\complex} \A$
be the $n$-fold tensor product of $\A$ and let $M$ be an
$\A$-bimodule. A $\complex$-linear map $\omega : \A^{\otimes n} \to M$
is called an $n$-cochain of an algebra $\A$ with values in $M$ and the
linear space of all $n$-cochains is denoted $C^{n}(\A, M)$ for
$n \geq 1.$ For $n = 0,$ let $C^{0}(\A, M) = M.$
The Hochschild cochain complex of $\A$ with coefficients in $M$,
denoted $C^{\ast}(\A, M, \delta)$ is the sequence of linear
spaces of cochains
\begin{eqnarray*}
  0 \to M \to C^{1}(\A, M) \to C^{2}(\A, M)\to \cdots \to C^{n}(\A, M)\to C^{n+1}(\A, M)\to \cdots
\end{eqnarray*}
together with the differentials $\delta_{n}:C^{n}(\A, M) \to C^{n + 1}(\A, M)$, defined by
\begin{eqnarray*}
  (\delta_{0}m)(a) = ma -am
\end{eqnarray*}
for $m \in M$ and $a \in A$, and 
\begin{align*}
  (\delta_{n}\omega)(a_{1}, \ldots, a_{n+1}) = a_{1}&\omega(a_{2}, \ldots, a_{n+1})
  + \sum_{j=1}^{n}(-1)^{j}\omega(a_{1}, \ldots, a_{j}a_{j+1}, \ldots, a_{n+1})\\
                                             &+ (-1)^{n+1}\omega(a_{1}, \ldots, a_{n})a_{n+1}
\end{align*}
for $\omega \in C^{n}(\A, M)$ and $n \geq 1$. One shows that $\delta_{n} \circ \delta_{n-1} = 0$.

An $n$-cochain $\omega$ is called $n$-cocycle if $\delta \omega = 0,$
and it is called an $n$-coboundary if there exist an $(n-1)$-cochain
$\rho$ such that $\omega = \delta \rho.$ The set of $n$-cocycles form
a subgroup of the additive group of $n$-cochains. The $n$-dimensional
cohomology group of $\A$ with coefficients in $M$, denoted by
$H^{n}(\A, M),$ is the group of $n$-dimensional cocycles
modulo the subgroup of $n$-dimensional coboundaries. Namely, denoting
by $Z^{n}(\A, M)$ the set of $n$-cocycles, i.e.
\begin{eqnarray*}
  Z^{n}(\A, M) = \{\omega \in C^{n}(\A, M):\delta\omega = 0\}
\end{eqnarray*}
and denoting by $B^{n}(\A, M)$ the set of $n$-coboundaries, i.e.
\begin{eqnarray*}
  B^{n}(\A, M) = \{\omega \in C^{n}(\A, M) : \omega = \delta\rho ,\hspace{0.1 cm} \mbox{ for} \hspace{0.1 cm} \rho \in C^{n-1}(\A, M)\},
\end{eqnarray*}
one defines
\begin{eqnarray*}
	H^{n}(\A, M) = Z^{n}(\A, M)/B^{n}(\A, M).
\end{eqnarray*}
Now, let us examine the cohomology groups for the cases when
$n = 0, 1.$ For $n = 0,$ the Hochschild cohomology group is given by
\begin{eqnarray*}
  H^{0}(\A, M) = \mbox{ker}(\delta_{0}) = \{m \in M: ma = am, \hspace{0.1 cm}\mbox{for}\hspace{0.1 cm} a \in \A\},
\end{eqnarray*}
which is simply the center of $M.$ For $n =1,$ one considers
\begin{eqnarray*}
  (\delta_{1}\omega)(a_{1}, a_{2}) = a_{1}\omega(a_{2}) - \omega(a_{1}a_{2}) + \omega(a_{1})a_{2}
\end{eqnarray*}
for $\omega\in C^1(\A,M)$ and the boundary map
\begin{eqnarray*}
	(\delta_{0}m)(a_{1}) = [m, a_{1}],
\end{eqnarray*}
for $a_{1}, a_{2} \in \A, m \in M.$  Given that $\omega \in \mbox{ker}(\delta_{1}),$ then one has
\begin{eqnarray*}
	\omega(a_{1}a_{2}) = a_{1}\omega(a_{2}) + \omega(a_{1})a_{2}
\end{eqnarray*}
implying that the linear map $\omega: \A\to M$ is a derivation of $\A$
with values in $M.$ One writes
\begin{eqnarray*}
  Z^{1}(\A, M) = \{\omega\in C^1(\A,M) : \omega(ab) = a\omega(b) + \omega(a)b 
  \quad \forall\, a, b \in\A\}
  = \mbox{Der}(\A, M),
\end{eqnarray*}
where $\mbox{Der}(\A, M)$ denotes the set of all derivations of $\A$
with values in $M.$ For $\omega$ in the image of $\delta_{0},$ one has
\begin{eqnarray*}
  B^{1}(\A, M) = \{\omega\in C^1(A,M): \omega = (\delta_{0}m)(a)
  = [m, a], \forall\, a \in \A\} = \mbox{Inn}(\A, M),
\end{eqnarray*}
where $\mbox{Inn}(\A, M)$ denotes the set of inner derivations of
$\A$ with values in $M.$ Thus, it follows that
\begin{eqnarray*}
  H^{1}(\A, M) = Z^{1}(\A, M)/B^{1}(\A, M) = \mbox{Der}(\A, M)/\mbox{Inn}(\A, M),
\end{eqnarray*}
represents the set of outer derivations of $\A$ with values in $M.$

\subsection{$\sigmatau$-Hochschild cohomology}
Let $\A$ be an associative algebra and $M$ be an $\A$-bimodule. Given
$\sigma,\tau\in\End(\A)$, we will construct a new $\A$-bimodule
structure on $M$, denoted by $M_{(\sigma, \tau)}$, by twisting the
action on $M$ by the endomorphisms $\sigma$ and $\tau.$ Namely, let
\begin{eqnarray*}
	f\cdot m = \sigma(f)m, \quad m\cdot f = m\tau(f)
\end{eqnarray*}
for $f \in \A$ and $m \in M.$ It is easy to show that $\Msigmatau$ is again
an $\A$-bimodule. We want to compute the Hochschild cohomology group
of $\A$ with values in $\Msigmatau.$ The boundary map
$\delta_{n}: C^{n}(\A, \Msigmatau) \to C^{n+1}(\A, \Msigmatau)$ is now
obtained as
\begin{eqnarray*}
	(\delta_{0}m)(a) = m\cdot a - a\cdot m = m\tau(a) - \sigma(a)m
\end{eqnarray*}
for $m \in \Msigmatau$ and $a \in \A,$ and 
\begin{align*}
  (\delta_{n}\omega)(a_{1}, \ldots,a_{n+1}) &=
     a_{1}\cdot \omega(a_{2}, \ldots, a_{n+1})
     + \sum_{j=1}^{n} (-1)^{j}\omega(a_{1}, \ldots, a_{j}a_{j+1},\ldots,a_{n+1})\\
  &\qquad+ (-1)^{n+1}\omega(a_{1}, \ldots,a_{n})\cdot a_{n+1}\\
  &=\sigma(a_{1})\omega(a_{2}, \ldots, a_{n+1}) 
     + \sum_{j=1}^{n}(-1)^{j}\omega(a_{2}, \ldots, a_{j}a_{j+1}, \ldots, a_{n+1})\\
  &\qquad+(-1)^{n+1}\omega(a_{1}, \ldots, a_{n})\tau(a_{n+1})
\end{align*}
for $n \geq 1$. In particular, for $n=1$ one obtains
\begin{align*}
  (\delta_1\omega)(a_1,a_2) = \sigma(a_1)\omega(a_2) +\omega(a_1)\tau(a_2)-\omega(a_1a_2)
\end{align*}
implying that a $1$-cocycle $\omega \in C^{1}(\A, \Msigmatau)$ is a 
$\sigmatau$-derivation with values in $M$. Let us denote
by $\mbox{Der}_{\sigmatau}(\A, M)$ and $\mbox{Inn}_{\sigmatau}(\A, M)$
the set of all $\sigmatau$-derivations and the set of all inner
$\sigmatau$-derivations of $\A$ with values in $M$ respectively. Then
one sets
\begin{eqnarray*}
  &Z^{n}_{\sigmatau}(\A, M) = Z^{n}(\A, \Msigmatau) \\
  &B^{n}_{\sigmatau}(\A, M) = B^{n}(\A, \Msigmatau) 
\end{eqnarray*}
noting that
\begin{eqnarray*}
  Z^{1}_{\sigmatau}(\A, M)& = \mbox{Der}_{\sigmatau}(\A, M),\\
  B^{1}_{\sigmatau}(\A,  M)& = \mbox{Inn}_{\sigmatau}(\A, M).
\end{eqnarray*}
Hence, the Hochschild
cohomology groups of $\A$ with values in $\Msigmatau$ are defined as
\begin{eqnarray*}
	H^{n}_{\sigmatau}(\A, M) =  Z^{n}_{\sigmatau}(\A, M)/B^{n}_{\sigmatau}(\A,  M) . 
\end{eqnarray*}

\noindent
Let us now consider the $\sigmatau$-Hochschild cohomology
groups, for $n = 0, 1.$ For $n = 0$ one has
\begin{eqnarray*}
  H^{0}_{\sigmatau}(\A, M) = \{m\in \Msigmatau: m\tau(a) = \sigma(a)m, \hspace{0.1 cm}\mbox{for} \hspace{0.1 cm} a \in \A\}.
\end{eqnarray*}
\begin{proposition}\label{0Hochschild(A)}
  Let $\A$ be a commutative algebra and $\sigmatau$ be a regular
  pair. Then
  \begin{eqnarray*}
    H^{0}_{\sigmatau}(\A, \A) =  0.
  \end{eqnarray*}
\end{proposition}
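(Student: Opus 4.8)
The plan is to identify $H^{0}_{\sigmatau}(\A,\A)$ explicitly, reduce the defining cocycle condition to an annihilation statement using commutativity, and then finish with regularity. As recorded just above the statement,
\begin{equation*}
  H^{0}_{\sigmatau}(\A, \A) = \{m\in \A : m\tau(a) = \sigma(a)m \text{ for all } a \in \A\},
\end{equation*}
since $H^0$ is the kernel of $\delta_0$ and $(\delta_0 m)(a)=m\tau(a)-\sigma(a)m$ for the twisted bimodule $\Asigmatau$. I would therefore start with an arbitrary $m\in\A$ satisfying $m\tau(a)=\sigma(a)m$ for every $a\in\A$.

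The key step is to exploit commutativity of $\A$: since $\sigma(a)m=m\sigma(a)$, the cocycle condition rewrites as $m\tau(a)=m\sigma(a)$, that is
\begin{equation*}
  m\,\delta(a) = m\paraa{\tau(a)-\sigma(a)} = 0
\end{equation*}
for all $a\in\A$, where $\delta=\tau-\sigma$.

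It remains to invoke regularity. By hypothesis $\sigmatau$ is a regular pair, so there exists $f\in\A$ with $\delta(f)$ not a zero divisor. Specializing the displayed identity to $a=f$ gives $m\,\delta(f)=0$, and since $\delta(f)$ is not a zero divisor this forces $m=0$. As $m$ was arbitrary, $H^{0}_{\sigmatau}(\A,\A)=0$.

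I expect no real obstacle here; the content of the statement is essentially unwinding definitions. The only point deserving attention is the interface with the definition of a regular pair: one must confirm that ``$\delta(f)$ is not a zero divisor'' yields precisely the implication $m\,\delta(f)=0\Rightarrow m=0$, which holds because in a commutative algebra an element is a zero divisor exactly when it annihilates some nonzero element.
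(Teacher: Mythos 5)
Your proof is correct and follows essentially the same route as the paper's: unwind the definition of $H^0_{\sigmatau}(\A,\A)$, use commutativity to turn the cocycle condition into $m\,\delta(a)=0$ for all $a$, and then apply regularity to a non-zero-divisor $\delta(a_0)$ to conclude $m=0$. The only difference is that you spell out the individual steps a bit more explicitly than the paper does.
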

\begin{proof}
  Let $b \in H_{\sigmatau}^{0}(\A, \A),$ then $b\delta(a) = 0$ for
  $a \in \A$ (using that $\A$ is commutative). Moreover, since $\sigmatau$ is
  a regular pair, there exists $a_{0} \in \A$ such that $\delta(a_{0})$ is not
  a zero divisor. Then
  \begin{eqnarray*}
    b\delta(a_{0}) = 0 \implies b = 0.
  \end{eqnarray*}
  Hence, $H_{\sigmatau}^{0}(\A, \A) = 0.$
\end{proof}

\noindent
For $n = 1,$ $(\delta_{1}\omega)(a_{1}, a_{2}) = 0$ implies that 
\begin{eqnarray*}
  \omega(a_{1}a_{2}) = \sigma(a_{1})\omega(a_{2}) + \omega(a_{1})\tau(a_{2})
\end{eqnarray*}
showing that $\omega \in Z^1_{\sigmatau}(\A, M)$ is a
$\sigmatau$-derivation. For $\omega \in C^{1}(\A, \Msigmatau)$ such
that
\begin{eqnarray*}
	\omega = (\delta_{0}m)(a) = m\tau(a) - \sigma(a)m
\end{eqnarray*}
implies that $\omega \in B_{\sigmatau}^{1}(\A, M)$ is an inner
$\sigmatau$-derivation. Hence
\begin{eqnarray*}
	H_{\sigmatau}^{1}(\A, M) =  \mbox{Der}_{\sigmatau}(\A, M)/\mbox{Inn}_{\sigmatau}(\A, M)
\end{eqnarray*}
constitute all outer $\sigmatau$-derivations of $\A$ with values in
$M.$ When $M=\A$ we shall simply write
$H^1_{\sigmatau}(\A) = H^{1}_{\sigmatau}(\A, \A)$. Note that if
$H^{1}_{\sigmatau}(\A) = 0$ then every $\sigmatau$-derivation is
inner.

Let us now consider a few cases where one can determine
  the $\sigmatau$-cohomology in lower degrees. First of all, it
  follows immediately from Corollary~\ref{cor:strongly.regular.inner}
  that $H^1_{\sigmatau}(\A,M)$ vanishes for strongly regular symmetric
  $\sigmatau$-derivations, which we state as follows.

\begin{proposition}
  Let $\sigma,\tau\in\End(\A)$ be such that $\sigmatau$ is a strongly
  regular pair of endomorphisms. If every $\sigmatau$-derivation on
  $\A$ with values in $M$ is symmetric, then
  $H^1_{\sigmatau}(\A,M)=H^1_{\tausigma}(\A,M)=0$.
\end{proposition}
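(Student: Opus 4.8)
The plan is to read both vanishing statements off Corollary~\ref{cor:strongly.regular.inner}, using that in degree one $H^{1}_{\sigmatau}(\A,M)=\mbox{Der}_{\sigmatau}(\A,M)/\mbox{Inn}_{\sigmatau}(\A,M)$, so that $H^{1}_{\sigmatau}(\A,M)=0$ is equivalent to every $\sigmatau$-derivation with values in $M$ being inner (and similarly for $\tausigma$). The first equality is then immediate: by hypothesis every $\sigmatau$-derivation is symmetric, and since $\sigmatau$ is strongly regular, Corollary~\ref{cor:strongly.regular.inner} gives that every symmetric $\sigmatau$-derivation is inner. Hence $\mbox{Der}_{\sigmatau}(\A,M)=\mbox{Inn}_{\sigmatau}(\A,M)$ and $H^{1}_{\sigmatau}(\A,M)=0$.

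For the second equality I would first record two symmetry observations. The difference associated with the pair $\tausigma$ is $\sigma-\tau=-\delta$, and $-\delta(g_{0})$ is invertible whenever $\delta(g_{0})$ is, so $\tausigma$ is strongly regular as soon as $\sigmatau$ is. Moreover, a $\tausigma$-derivation is symmetric (that is, simultaneously a $\sigmatau$-derivation) precisely when it lies in $\mbox{Der}_{\sigmatau}(\A,M)\cap\mbox{Der}_{\tausigma}(\A,M)$, which is exactly the condition for a $\sigmatau$-derivation to be symmetric; thus the two notions of symmetry coincide. Applying Corollary~\ref{cor:strongly.regular.inner} to the strongly regular pair $\tausigma$ then shows that every symmetric $\tausigma$-derivation is inner, this time in the form $X(f)=m_{0}\sigma(f)-\tau(f)m_{0}$. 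It therefore suffices to prove that, under the hypothesis, every $\tausigma$-derivation is symmetric, i.e. $\mbox{Der}_{\tausigma}(\A,M)\subseteq\mbox{Der}_{\sigmatau}(\A,M)$.

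This inclusion is the crux, since the hypothesis only gives the reverse inclusion. To bridge the two twistings I would extract a structural consequence of the hypothesis by testing it on inner derivations: for each $m\in M$ the map $f\mapsto m\tau(f)-\sigma(f)m$ is an inner $\sigmatau$-derivation, hence symmetric, so Proposition~\ref{str reg symm der} writes it as $f\mapsto\delta(f)m'$ with $m'$ commuting with $\sigma(\A)$ and $\tau(\A)$. Comparing the two expressions yields $(m-m')\tau(f)=\sigma(f)(m-m')$ for all $f$, and hence a direct sum decomposition $M=C\oplus I$, where $C=\{c:[c,\sigma(a)]=[c,\tau(a)]=0\ \forall a\}$ is the common centralizer and $I=\{i:i\tau(a)=\sigma(a)i\ \forall a\}$ is the intertwiner space; the sum is direct because any $n\in C\cap I$ satisfies $\delta(a)n=0$ for all $a$, forcing $n=0$ via the invertible $\delta(g_{0})$.

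Finally, given $Y\in\mbox{Der}_{\tausigma}(\A,M)$, it is enough to establish the identity $\delta(f)Y(g)=Y(f)\delta(g)$ for all $f,g$: adding it to the $\tausigma$-Leibniz rule produces the $\sigmatau$-Leibniz rule by reversing the computation in Lemma~\ref{lemma:Xd.dX}, so that $Y$ becomes symmetric and the argument closes. I expect this last identity to be the main obstacle. The intended route is to decompose the values of $Y$ along $M=C\oplus I$ and to use the defining relations of $C$ and $I$, together with the invertibility of $\delta(g_{0})$, to show that the $I$-component cannot contribute to $\delta(f)Y(g)-Y(f)\delta(g)$; this is precisely the step where the rigidity imposed on $M$ by the $\sigmatau$-hypothesis must be transferred to the oppositely twisted $\tausigma$-derivations.
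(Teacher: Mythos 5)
Your proof of the first equality is correct and coincides with the paper's: the paper obtains the proposition directly from Corollary~\ref{cor:strongly.regular.inner}, since under the hypothesis every $\sigmatau$-derivation is symmetric, hence inner, so $\Der_{\sigmatau}(\A,M)=\mbox{Inn}_{\sigmatau}(\A,M)$ and $H^1_{\sigmatau}(\A,M)=0$. The genuine gap is in the second equality, and you flag it yourself. Everything there reduces to the inclusion $\Der_{\tausigma}(\A,M)\subseteq\Der_{\sigmatau}(\A,M)$, equivalently to the identity $\delta(f)Y(g)=Y(f)\delta(g)$ for an arbitrary $\tausigma$-derivation $Y$, and this is exactly the step you do not carry out. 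Your decomposition $M=C\oplus I$ is correct as far as it goes (existence follows by applying Proposition~\ref{str reg symm der} to the inner, hence by hypothesis symmetric, $\sigmatau$-derivation $f\mapsto m\tau(f)-\sigma(f)m$, and directness from the invertibility of $\delta(g_0)$), but it does not obviously close the argument: $C$ and $I$ are only $\K$-linear subspaces, not submodules for the actions appearing in the $\tausigma$-Leibniz rule. For instance, $c\in C$ gives $[\tau(f)c,\sigma(g)]=[\tau(f),\sigma(g)]c$, which need not vanish for noncommutative $\A$, so $\tau(f)c\notin C$ in general. Consequently the components $Y_C$ and $Y_I$ of a $\tausigma$-derivation need not individually satisfy any Leibniz rule, and there is nothing to feed into the defining relations of $C$ and $I$. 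As written, $H^1_{\tausigma}(\A,M)=0$ is not established.

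That said, you have correctly identified a point the paper itself passes over: the paper asserts that the whole proposition ``follows immediately'' from Corollary~\ref{cor:strongly.regular.inner}, which is literally true only for $H^1_{\sigmatau}(\A,M)$. The conclusion for $H^1_{\tausigma}(\A,M)$ requires reading the hypothesis symmetrically, namely that every $\tausigma$-derivation is also symmetric (equivalently $\Der_{\sigmatau}(\A,M)=\Der_{\tausigma}(\A,M)$); this holds automatically in the motivating commutative case but is not a formal consequence of the hypothesis as stated. Under that (presumably intended) reading, both vanishings are one-line applications of Corollary~\ref{cor:strongly.regular.inner} --- using, as you correctly note, that $\tausigma$ is strongly regular whenever $\sigmatau$ is, because $-\delta(g_0)$ is invertible when $\delta(g_0)$ is --- and none of the $C\oplus I$ machinery is needed. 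If you wish to keep the hypothesis exactly as stated, you must either supply the missing implication $\Der_{\tausigma}(\A,M)\subseteq\Der_{\sigmatau}(\A,M)$ or restrict to settings, such as commutative $\A$, where it is automatic.
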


\noindent
In particular, for a commutative algebra $\A$, every
$\sigmatau$-derivation is symmetric and one can conclude that
$H^1_{\sigmatau}(\A)=0$ if $\sigmatau$ is a strongly regular pair of
endomorphisms.

\subsection{Examples of $\sigmatau$-Hochchild cohomology}\label{sec:ex.sigmatau.cohomology}
Let $\A = \mathbb{K}[x],$ be the polynomial algebra in the
indeterminate $x$ over the field $\mathbb{K}.$ Both the zero and one
dimensional Hochschild cohomology of $\mathbb{K}[x]$ is isomorphic to
$\mathbb{K}[x],$ i.e., $H^{0}(\mathbb{K}[x]) \simeq \mathbb{K}[x]$ and
$H^{1}(\mathbb{K}[x]) \simeq \mathbb{K}[x]\frac{d}{dx}$. (see for example
\cite{witherspoon}). In this section we describe the $\sigmatau$-Hochschild cohomology groups in degree 0 and 1.

In degree 0, one can apply Proposition~\ref{0Hochschild(A)}, assuming
that $\sigma\neq\tau$. In that case, there exists a polynomial
$p_{0}\in\K[x]$ such that $\delta(p_{0}) \neq 0$ implying that
$\delta(p_0)$ is not a zero divisor since $\K[x]$ is an integral
domain. Hence
\begin{eqnarray*}
	H^{0}_{\sigmatau}(\mathbb{K}[x]) = 0.
\end{eqnarray*}

Next, let us consider $H^1_{\sigmatau}(\K[x])$.

\begin{proposition}\label{outer der poly alg}
  Let $\sigma,\tau\in\End(\K[x])$ and let $\delta=\tau-\sigma$. Then
  \begin{align*}
    H^{1}_{\sigmatau}(\mathbb{K}[x]) = \mathbb{K}[x]/I_{\delta},
  \end{align*}
  where 
  \begin{eqnarray*}
    I_{\delta} = \{p(x)\delta(x): p(x) \in \mathbb{K}[x]\}	
  \end{eqnarray*}
  is the ideal generated by $\delta(x)$.
\end{proposition}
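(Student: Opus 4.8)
The plan is to analyze the two groups $\mbox{Der}_{\sigmatau}(\K[x])$ and $\mbox{Inn}_{\sigmatau}(\K[x])$, whose quotient is $H^1_{\sigmatau}(\K[x])$, by means of the $\K$-linear evaluation map
\[
\Phi\colon\mbox{Der}_{\sigmatau}(\K[x])\to\K[x],\qquad \Phi(X)=X(x),
\]
and to show that $\Phi$ is an isomorphism sending $\mbox{Inn}_{\sigmatau}(\K[x])$ onto $I_\delta$. Since a $\sigmatau$-derivation must satisfy $X(1)=0$ (immediate from the twisted Leibniz rule together with $\sigma(1)=\tau(1)=1$), the twisted Leibniz rule applied to $x\cdot x^{n-1}$ yields the recursion $X(x^n)=\sigma(x)X(x^{n-1})+X(x)\tau(x)^{n-1}$. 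Hence $X$ is completely determined on the monomial basis, and thus on all of $\K[x]$, by the single value $X(x)$, so $\Phi$ is injective.

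For surjectivity, given $c\in\K[x]$ I would define $X_c$ on the monomial basis by
\[
X_c(x^n)=c\sum_{i=0}^{n-1}\sigma(x)^{n-1-i}\tau(x)^i,\qquad X_c(1)=0,
\]
extended $\K$-linearly, and verify that $X_c$ is a $\sigmatau$-derivation with $\Phi(X_c)=c$. Because $X_c$, $\sigma$, $\tau$ are $\K$-linear and multiplication is bilinear, both sides of the identity $X_c(fg)=\sigma(f)X_c(g)+X_c(f)\tau(g)$ are bilinear in $(f,g)$, so it suffices to check it for $f=x^m$ and $g=x^n$; after factoring out $c$ (using commutativity of $\K[x]$) this reduces to the reindexing identity
\[
\sigma(x)^m\sum_{i=0}^{n-1}\sigma(x)^{n-1-i}\tau(x)^i+\Big(\sum_{i=0}^{m-1}\sigma(x)^{m-1-i}\tau(x)^i\Big)\tau(x)^n=\sum_{k=0}^{m+n-1}\sigma(x)^{m+n-1-k}\tau(x)^k.
\]
This is the only genuinely computational step; conceptually it reflects the divisibility $\delta(x)\mid\delta(x^n)=\tau(x)^n-\sigma(x)^n$ that makes the formula consistent, which is also the content of Lemma~\ref{lemma:Xd.dX} in the commutative case, giving $X(f)\delta(x)=\delta(f)X(x)$.

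It remains to identify the image of the inner derivations. Since $\K[x]$ is commutative, an inner $\sigmatau$-derivation has the form $X(f)=m_0\tau(f)-\sigma(f)m_0=m_0\delta(f)$ for some $m_0\in\K[x]$, so $\Phi(X)=m_0\delta(x)$ and therefore $\Phi\big(\mbox{Inn}_{\sigmatau}(\K[x])\big)=\{m_0\delta(x):m_0\in\K[x]\}=I_\delta$. As $\Phi$ is a $\K$-linear isomorphism carrying the subspace $\mbox{Inn}_{\sigmatau}(\K[x])$ exactly onto $I_\delta$, it descends to an isomorphism
\[
H^1_{\sigmatau}(\K[x])=\mbox{Der}_{\sigmatau}(\K[x])/\mbox{Inn}_{\sigmatau}(\K[x])\;\cong\;\K[x]/I_\delta,
\]
which is the claim. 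I expect the main obstacle to be the surjectivity of $\Phi$: one must produce, for every target polynomial $c$, an actual $\sigmatau$-derivation realizing it, and the cleanest route is the explicit monomial formula together with the bilinearity reduction, which also sidesteps any case distinction between $\sigma=\tau$ (where $I_\delta=0$) and $\sigma\neq\tau$.
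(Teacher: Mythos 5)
Your proof is correct and follows essentially the same route as the paper: both identify $Z^{1}_{\sigmatau}(\mathbb{K}[x])\cong\mathbb{K}[x]$ via the evaluation $X\mapsto X(x)$ and observe that the inner $\sigmatau$-derivations correspond exactly to $I_{\delta}$. The only difference is that you supply the explicit monomial formula for $X_c$ and the bilinearity reduction that verify surjectivity of the evaluation map, details the paper asserts without proof.
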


\begin{proof}
  In $\mathbb{K}[x],$ a $\sigmatau$-derivation $X$ is completely determined by 
  \begin{eqnarray*}
    X(x) = f_{0}(x) \in \mathbb{K}[x]
  \end{eqnarray*}
  and, conversely, any such choice of $f_{0} \in \mathbb{K}[x]$ gives
  a $\sigmatau$-derivation by using the twisted Leibniz rule, implying
  that
  $Z^{1}_{\sigmatau}(\mathbb{K}[x]) \simeq
  \mathbb{K}[x]$, where the isomorphism is provided by
  $X\mapsto X(x)=f_0(x)$. Now if
  $X \in B^{1}_{\sigmatau}(\mathbb{K}[x])$ then
  \begin{eqnarray*}
    X(x) = g_{0}(x)(\tau(x) - \sigma(x)) = g_{0}(x)\delta(x), 
  \end{eqnarray*}
  for some $g_{0} \in \mathbb{K}[x]$, implying that 
  \begin{eqnarray*}
    B^{1}_{\sigmatau}(\mathbb{K}[x]) \simeq I_{\delta}.
  \end{eqnarray*}
  Hence
  \begin{equation*}
    H^{1}_{\sigmatau}(\mathbb{K}[x])
    = Z^{1}_{\sigmatau}(\mathbb{K}[x])/B^{1}_{\sigmatau}(\mathbb{K}[x])
    =  \mathbb{K}[x]/I_{\delta}.\qedhere
  \end{equation*}
\end{proof}

\noindent
To illustrate the above result, let us apply it to a particular case.

\begin{proposition}\label{H1(k[x])}
  Let $\sigma, \tau$ be endomorphisms of the polynomial algebra
  $\mathbb{K}[x]$ given by $\sigma(x) = qx$ and $\tau(x) = x$ for
  $q \neq 0, 1.$ Then
  \begin{eqnarray*}
    H^{1}_{\sigmatau}(\mathbb{K}[x], \mathbb{K}[x]) = \{\lambda 1 : \lambda \in \mathbb{K}\}.
  \end{eqnarray*}
\end{proposition}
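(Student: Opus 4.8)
The plan is to apply Proposition~\ref{outer der poly alg} directly. That proposition already identifies $H^1_{\sigmatau}(\K[x])$ with the quotient $\K[x]/I_\delta$, where $I_\delta$ is the principal ideal generated by $\delta(x)=\tau(x)-\sigma(x)$. So the entire task reduces to computing $\delta(x)$ explicitly for the given endomorphisms and then understanding the resulting quotient.

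First I would compute $\delta(x) = \tau(x)-\sigma(x) = x - qx = (1-q)x$. The crucial observation is that, since $q\neq 1$, the scalar $1-q$ is a nonzero element of $\K$, hence a unit. Multiplying a generator of a principal ideal by a unit does not change the ideal, so $I_\delta = \big((1-q)x\big) = (x)$, the ideal of all polynomials with vanishing constant term. Concretely, as $p(x)$ ranges over $\K[x]$, the product $(1-q)p(x)$ ranges over all of $\K[x]$, so $I_\delta=\{r(x)x : r(x)\in\K[x]\}$.

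Next I would identify the quotient $\K[x]/(x)$. Every polynomial is congruent modulo $(x)$ to its constant term, so the constant polynomials $\lambda 1$ (with $\lambda\in\K$) form a complete and irredundant set of coset representatives. Combining this with Proposition~\ref{outer der poly alg} yields $H^1_{\sigmatau}(\K[x],\K[x]) = \K[x]/(x) = \{\lambda 1 : \lambda\in\K\}$, as claimed.

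There is no genuine obstacle here: the statement is simply a specialization of Proposition~\ref{outer der poly alg}, and the only content is the elementary fact that scaling by the unit $1-q$ leaves the ideal unchanged, so that $I_\delta=(x)$. The one point to state explicitly is that the hypothesis $q\neq 1$ is exactly what makes $\delta(x)$ nonzero with unit leading coefficient, whereas the hypothesis $q\neq 0$ is needed only to ensure that $\sigma$ is a well-defined (unital) endomorphism of $\K[x]$ and plays no role in this ideal computation.
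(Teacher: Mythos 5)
Your proof is correct and follows essentially the same route as the paper: both apply Proposition~\ref{outer der poly alg}, compute $\delta(x)=(1-q)x$, and conclude that $I_\delta$ is the ideal of polynomials without constant term (the paper lists the monomials $x^n=\bigl(\tfrac{x^{n-1}}{1-q}\bigr)\delta(x)$ explicitly, while you invoke the cleaner observation that scaling a generator by the unit $1-q$ does not change the ideal). One minor quibble: your closing remark that $q\neq 0$ is needed for $\sigma$ to be well defined is not accurate, since $x\mapsto 0$ still gives a unital endomorphism of $\K[x]$; in fact $q\neq 0$ plays no role in the argument at all, just as you suspected.
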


\begin{proof}
  Using the definition of $\sigma$ and $\tau,$ one obtains
  \begin{eqnarray*}
    \delta(x) = \tau(x) - \sigma(x) = (1 - q)x
  \end{eqnarray*}
  and note that 
  \begin{eqnarray*}
    \left(\dfrac{1}{1-q}\right)\delta(x) = x, \left(\dfrac{x}{1-q}\right)\delta(x) = x^{2}, \ldots, \left(\dfrac{x^{n-1}}{1-q}\right)\delta(x) = x^{n},
  \end{eqnarray*}
  are all in the ideal
  $I_{\delta} = \langle p(x)\delta(x)| p(x) \in \mathbb{K}[x]\rangle$
  except $1.$ Therefore, $I_{\delta}$ is the polynomial algebra
  $\mathbb{K}[x]$ without $\lambda1$ for $\lambda \in \mathbb{K}.$
  Using Proposition \ref{outer der poly alg}, the outer
  $\sigmatau$-derivations of $\mathbb{K}[x]$ for the endomorphisms
  $\sigma(x) = qx$ and $\tau(x) = x$ are
  \begin{equation*}
    \mathbb{K}[x]/I_{\delta} = \{\lambda1| \lambda \in \mathbb{K}\}.\qedhere
  \end{equation*}
\end{proof}

\begin{corollary}\label{Jack out der}
	The Jackson derivative is an outer $\sigmatau$-derivation of $\mathbb{K}[x].$
\end{corollary}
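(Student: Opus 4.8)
The plan is to read off the corollary directly from the computation of $H^1_{\sigmatau}(\K[x])$ in Proposition~\ref{H1(k[x])}, applied to the very endomorphisms $\sigma(x)=qx$, $\tau(x)=x$ for which the Jackson derivative was shown to be a $\sigmatau$-derivation. First I would recall, from Proposition~\ref{outer der poly alg}, that a $\sigmatau$-derivation of $\K[x]$ is completely determined by its value on $x$, giving the isomorphism $Z^1_{\sigmatau}(\K[x])\simeq\K[x]$ via $X\mapsto X(x)$. So the task reduces to computing where $D_q$ lands under this isomorphism and then checking whether that element survives to a nonzero class in the quotient $\K[x]/I_\delta$.

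The key computation is immediate from the definition \eqref{Jackson der}:
\[
  D_q(x) = \frac{qx - x}{(q-1)x} = 1,
\]
so under the isomorphism $Z^1_{\sigmatau}(\K[x])\simeq\K[x]$ the Jackson derivative corresponds to the constant polynomial $1\in\K[x]$.

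Next I would pass to the first $\sigmatau$-Hochschild cohomology group. By Proposition~\ref{H1(k[x])}, the ideal $I_\delta$ generated by $\delta(x)=(1-q)x$ is precisely $\K[x]$ with the constant terms removed, so that $\K[x]/I_\delta = \{\lambda 1 : \lambda\in\K\}$ and, in particular, $1\notin I_\delta$. Hence the class of $D_q$ in $H^1_{\sigmatau}(\K[x]) = \K[x]/I_\delta$ is the nontrivial coset $1 + I_\delta$, which is nonzero. Since $B^1_{\sigmatau}(\K[x]) = \mbox{Inn}_{\sigmatau}(\K[x])$, a nonzero cohomology class means exactly that $D_q\notin\mbox{Inn}_{\sigmatau}(\K[x])$, i.e.\ $D_q$ is not inner, and is therefore an outer $\sigmatau$-derivation.

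There is no substantial obstacle here: once Proposition~\ref{H1(k[x])} is in hand, the corollary rests entirely on the single evaluation $D_q(x)=1$ and the observation $1\notin I_\delta$. The only point requiring a little care is bookkeeping of the endomorphisms—one must note that the pair $\sigma(x)=qx$, $\tau(x)=x$ used to realize $D_q$ as a $\sigmatau$-derivation is exactly the pair for which Proposition~\ref{H1(k[x])} was proved, and that the hypothesis $q\neq 0,1$ is what guarantees both that $D_q$ is well defined and that $\delta(x)=(1-q)x$ is a nonzero non-unit generating the relevant ideal.
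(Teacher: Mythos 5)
Your proof is correct and follows essentially the same route as the paper: both evaluate $D_q(x)=1$, identify the Jackson derivative with the constant polynomial $1$ under the isomorphism $Z^1_{\sigmatau}(\K[x])\simeq\K[x]$, and invoke Proposition~\ref{H1(k[x])} to see that $1\notin I_\delta$, so the class is nonzero and $D_q$ is outer. Your write-up is slightly more explicit about why $1$ survives in the quotient, but the argument is the same.
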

\begin{proof}
  In $\mathbb{K}[x]$, every $\sigmatau$-derivation $X$ is determined
  by $X(x) = f_{0}(x) \in \mathbb{K}[x].$ Let $X$ be the Jackson
  derivative. It is easy to show that $X$ is a $\sigmatau$-derivation
  with $\sigma(x) = qx$ and $\tau(x) = x$ and $X(x) = 1.$ Since the
  Jackson derivative is determined by $f_{0}(x) = 1$ then by
  Proposition \ref{H1(k[x])}, it is an outer derivation of
  $\mathbb{K}[x].$
\end{proof}

\noindent
Next, let $\A = \mat$ be the algebra of $(N \times N)$ complex
matrices. If $\sigma,\tau\in\End(\A)$ are automorphisms, one can
easily describe $H^0_{\sigma\tau}(\mat)$.

\begin{proposition}\label{prop:H0.matrix}
  If $\sigma,\tau\in\Aut(\mat)$ then $H^0_{\sigmatau}\paraa{\mat}\simeq\complex$.
\end{proposition}

\begin{proof}
  Since $\mat$ is a central simple algebra, the Skolem-Noether theorem
  implies that all automorphisms are inner; that is, there exist
  $U,V\in\mat$ (unique up to multiplication by a complex number) such
  that $\tau(A)=U^{-1}AU$ and $\sigma(A)=V^{-1}AV$ for all
  $A\in\mat$. By definition
  \begin{align*}
    H^0_{\sigmatau}(\mat) = \{A\in\mat:A\tau(B)=\sigma(B)A\quad\forall\, B\in\mat\},
  \end{align*}
  and one computes
  \begin{align*}
    A\tau(B)=\sigma(B)A\equivalent
    AU^{-1}BU = V^{-1}BVA\equivalent
    [VAU^{-1},B]=0.
  \end{align*}
  For the above relation to be true for all $B\in\mat$ one necessarily
  has that $VAU^{-1}=z\mid_N$ for some $z\in\complex$, where $\mid_N$
  denotes the identity matrix. Thus, it follows that
  \begin{equation*}
    H^0_{\sigmatau}\paraa{\mat} = \{zV^{-1}U: z\in\complex\}\simeq\complex.\qedhere
  \end{equation*}
\end{proof}

\noindent
It is well known that all derivations on $\mat$ are inner, implying
that
\begin{equation*}
  H^1_{\sigmatau}(\mat)=0.
\end{equation*}
Moreover, it is a fact that $H^n(\mat,M)=0$ for $n\geq 1$ and
arbitrary bimodule $M$ (see e.g. \cite{MR1945297}), implying the
following result.

\begin{proposition}
  Let $\mat$ denote the algebra of $N\times N$ complex matrices, and let $M$ be a $\mat$-bimodule. Then
  \begin{eqnarray*}
    H^{n}_{\sigmatau}(\mat, M) = 0
  \end{eqnarray*}
  for $n\geq 1$.
\end{proposition}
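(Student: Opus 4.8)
The plan is to reduce the statement to the vanishing theorem for ordinary Hochschild cohomology of $\mat$ with arbitrary bimodule coefficients, which was cited immediately before the statement. The crucial observation is that $\sigmatau$-Hochschild cohomology was \emph{defined} as ordinary Hochschild cohomology with coefficients in the twisted bimodule $\Msigmatau$; that is,
\begin{align*}
  H^n_{\sigmatau}(\mat, M) = H^n(\mat, \Msigmatau).
\end{align*}
Hence it suffices to show that the right-hand side vanishes for $n\geq 1$.

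First I would recall that $\Msigmatau$ is, by construction, again a genuine $\mat$-bimodule: the twisted actions $f\cdot m = \sigma(f)m$ and $m\cdot f = m\tau(f)$ define a bimodule structure for any $\sigma,\tau\in\End(\mat)$, exactly as was observed when $\Msigmatau$ was introduced. The point to stress is that no special hypothesis on $\sigma$ or $\tau$ (such as being automorphisms, as was needed for the $H^0$ computation) enters here; the twisting always produces a legitimate $\mat$-bimodule, since $\sigma$ and $\tau$ are algebra homomorphisms and multiplication in $\mat$ is associative.

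Next I would invoke the fact cited just above the statement, namely that $H^n(\mat, M')=0$ for every $n\geq 1$ and every $\mat$-bimodule $M'$ (see \cite{MR1945297}). This reflects that $\mat$ is a separable (indeed central simple) $\complex$-algebra and therefore has vanishing Hochschild cohomology in all positive degrees with arbitrary coefficients. Applying this with the particular choice $M'=\Msigmatau$ gives $H^n(\mat, \Msigmatau)=0$ for $n\geq 1$, and combining this with the identification above yields $H^n_{\sigmatau}(\mat, M)=0$, which completes the argument.

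There is essentially no obstacle in this proof: all the real content is packaged into the definition of $\sigmatau$-Hochschild cohomology together with the cited separability result. The only point requiring any care is confirming that $\Msigmatau$ is indeed a bimodule for arbitrary endomorphisms $\sigma,\tau$, so that the cited theorem applies verbatim with $\Msigmatau$ in place of $M$; but this verification was already carried out when the twisted bimodule was defined, so the result follows at once.
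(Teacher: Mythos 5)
Your proof is correct and follows essentially the same route as the paper: both reduce the claim to the identification $H^n_{\sigmatau}(\mat,M)=H^n(\mat,\Msigmatau)$, observe that $\Msigmatau$ is an honest $\mat$-bimodule, and then apply the cited vanishing of $H^n(\mat,M')$ for all bimodules $M'$ and $n\geq 1$. Your write-up is in fact slightly more careful than the paper's, which only displays the case $n=1$ explicitly.
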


\begin{proof}
  Since for any bimodule $M$ and $n\geq 1$ one has $H^{n}(\mat, M) = 0$, it follows that
  \begin{equation*}
    H^{1}_{\sigmatau}(\mat, M) = H^{1}(\mat, M_{\sigmatau}) = 0.\qedhere
  \end{equation*}
\end{proof}

\noindent
In particular, the above result implies that every $\sigmatau$-derivation on $\mat$ is inner.

\section{Symmetric $(\sigma,\tau)$-algebras}

\noindent
The framework to study $\sigmatau$-connections on $\sigmatau$-algebras
has been studied in \cite{arnlind2022geometry}, \cite{ail:qdeformed}
and \cite{ail:lc.spheres}. In this Section, we extend the framework to
accommodate symmetric $\sigmatau$-derivations already introduced in
Section $2.$ To this effect, we introduce symmetric
$\sigmatau$-algebras and symmetric $\sigmatau$-connections and study
how the strongly regular condition of endomorphisms in a
$\sigmatau$-algebra influence such connections.  Let us recall a few basic 
definitions from \cite{arnlind2022geometry}.

\begin{definition}
  A $(\sigma, \tau)$-algebra $\Sigma = (\A, \{X_{a}\}_{a \in I})$ is a
  pair where $\A$ is an associative algebra and $X_{a}$ is a
  $(\sigma_{a}, \tau_{a})$-derivation of $\A$ for $a \in I.$
\end{definition}

\begin{definition}
  For a $\sigmatau$-algebra $\Sigma=(\A,\{X_a\}_{a\in I}),$ we let
 $
    \TSigma \subseteq \mbox{Hom}_{\complex}(\A, \A)
  $
  be the vector space generated by $\{X_a\}_{a\in I}$. We call
  $\TSigma$ the \emph{tangent space of $\Sigma$}.
\end{definition}

\noindent
Recall that a pair $\sigmatau$ of endomorphisms is strongly regular if
there exists $f_{0} \in \A$ such that
$\delta(f_{0})=\tau(f_0)-\sigma(f_0)$ is invertible.  In a
$\sigmatau$-algebra, there are several pairs of endomorphisms, and for
each $\sigmatau$-derivation $X_{a}$ in the $\sigmatau$-algebra, there
are pairs $(\sigma_{a}, \tau_{a})$ for $a \in I.$ We would like to
consider the case that all the pairs of endomorphisms in the
$\sigmatau$-algebra are simultaneously strongly regular, i.e., there
exist $f_{a} \in \A$ for $a \in I$ such that
$\delta_{a}(f_{a}) = \tau_{a}(f_{a}) - \sigma_{a}(f_{a})$ is
invertible. We shall call a $\sigmatau$-algebra with this property
strongly regular. Similarly, if each $\sigmatau$-derivation in the
$\sigmatau$-algebra is symmetric, we shall call the
$\sigmatau$-algebra symmetric. In the following, we give precise
definitions.

\begin{definition}
  A $(\sigma, \tau)$-algebra $\Sigma = (\A, \{X_{a}\}_{a \in I})$ is
  called \emph{(strongly) regular} if $(\sigma_a,\tau_a)$ is a
  (strongly) regular pair of endomorphisms of $\A$ for all $a\in I$.
\end{definition}

\begin{definition}
  A symmetric $(\sigma, \tau)$-algebra
  $\Sigma = (\A, \{X_{a}\}_{a \in I})$ is a $\sigmatau$-algebra such
  that $X_{a}$ is a symmetric $(\sigma_{a}, \tau_{a})$-derivation for
  $a \in I$.
\end{definition}

\noindent
Note that if $\A$ is a commutative algebra, then any
$\sigmatau$-algebra $\Sigma = (\A, \{X_{a}\}_{a\in I})$ is symmetric
since $\sigmatau$-derivations on commutative algebras are
symmetric. For such an algebra to be a regular $\sigmatau$-algebra,
one may consider the case when the algebra has no zero divisors and
$\sigma_{a} \neq \tau_{a}$ for all $a\in I.$ Again, this is possible
because then
$\delta_{a}(f_a) = \tau_{a}(f_a) - \sigma_{a}(f_A) \neq 0$ for some
$f_a\in\A$.
To obtain a strongly regular algebra, one takes for example, the algebra $\K[x]$ of
polynomials in one variable $x$ and assumes that $\sigma_{a} = \sigma$
with $\sigma(p)(x) = p(x - h)$ for $h\in \K$ and $p(x)\in \K$ and
assume that $\tau_{a} = \tau$ with $\tau(p)(x) = p(x).$ In this case
each pair $(\sigma_{a}, \tau_{a})$ is strongly regular in $\K[x].$

Let us now recall the definition of modules over $\sigmatau$-algebras.

\begin{definition}
  Let $\Sigma=(\A,\{X_a\}_{a\in I})$ be a $\sigmatau$-algebra. A
  \emph{left $\Sigma$-module $(M,\{(\sigmah_a,\tauh_a)\}_{a\in I})$}
  is a left $\A$-module $M$ together with $\complex$-linear maps
  $\sigmah_a,\tauh_a:M\to M$ such that
  \begin{align*}
    &\sigmah_a(fm) = \sigma_a(f)\sigmah(m)\\
    &\tauh_a(fm) = \tau_a(f)\tauh_a(m)
  \end{align*}
  for $f\in\A$, $m\in M$ and $a\in I$.
\end{definition}

\noindent Similarly, one defines right $\Sigma$-modules as well as
$\Sigma$-bimodules (cf. \cite{arnlind2022geometry}).
Let now us recall a few examples of $\Sigma$-modules.

\begin{example}\label{free Sig mod}
  Let $\Sigma = (\A, \{X_{a}\}_{a\in I})$ be a $\sigmatau$-algebra and
  let $\A^{n}$ be a free left $\A$-module with basis $\{e_{i}\},$
  $i= 1, \ldots, n.$ One introduces a canonical $\Sigma$-module
  structure on $\A^{n}$ by setting
  \begin{eqnarray*}
    \sigmah_{a}^{0}(m) = \sigma_{a}(m^{i})e_{i}, \quad \tauh_{a}^{0}(m) = \tau_{a}(m^{i})e_{i}
  \end{eqnarray*}
  where $m = m^{i}e_{i} \in \A^{n}$ (summation over $i$ from $1$ to
  $n$ is implied). Now, let $f\in \A$ and $m\in \A^{n},$ one has
  \begin{eqnarray*}
    &&\sigmah_{a}^{0}(fm) = \sigma_{a}(fm^{i})e_{i} = \sigma_{a}(f)\sigma_{a}(m^{i})e_{i} = \sigma_{a}(f)\sigmah_{a}^{0}(m)\\
    &&\tauh_{a}^{0}(fm) = \tau_{a}(fm^{i})e_{i} = \tau_{a}(f)\tau_{a}(m^{i})e_{i} = \tau_{a}(f)\tauh_{a}^{0}(m),
  \end{eqnarray*}
  showing that $(\A^{n}, \{(\sigmah_{a}^{0}, \tauh_{a}^{0})\}_{a\in i})$ is a left $\Sigma$-module.
\end{example}

\noindent
The example above is an example of a free $\Sigma$-module. Let us now
look an example of a $\Sigma$-module which is not free.
\begin{example}

  Let $\A = \mat,$ the algebra of complex $(N\times N)$-matrices and
  let $\Sigma = (\A, \{X_{a}\}_{a=1}^{n})$ where $X_{a}:\mat\to\mat$
  is given by
  \begin{eqnarray*}
    X_{a}(A) = A - \sigma_{a}(A)
  \end{eqnarray*}
  with endomorphism $\sigma_{a}(A) = U_{a}AU_{a}^{-1}$ for
  $A \in \mat$ and some complex $(N\times N)$-matrices $U_{a}$ such
  that $[U_{a}, U_{b}] = 0$ for $a, b = 1, \ldots, n.$ Then $X_a$
    is a $(\sigma_a,\Id)$-derivation and
    $(\mat, \{(\sigma_{a}, \Id)\}_{a = 1}^{n})$ is a free
    $\Sigma$-module.

    According to \cite{arnlind2022geometry}, applying an appropriate
    projector to any $\Sigma$-module, one obtains a projective
    $\Sigma$-module. Let $p = v_{0}v_{0}^{\dagger}$ for
    $v_{0} \in \complex^{N}$ with $|v_{0}| = 1$ be the projector.
    Then $(\matp, \{(\sigmah_{a}, \tauh_{a})\}_{a = 1}^{n})$, with
    $\sigmah_{a}(A) = \sigma_{a}(A)p$ and $\tauh_{a}(A) = Ap$, is a
    projective $\Sigma$-module and one notes that $\matp$ is
    isomorphic to $\complex^N$, which is not a free left
    $\mat$-module.

\end{example}

\noindent
Now, let us recall the definition of $\sigmatau$-connections on $\Sigma$-modules.
\begin{definition}
	Let $\Sigma=(\A,\{X_a\}_{a\in I})$ be a $\sigmatau$-algebra and let
	$(M,\{(\sigmah_a,\tauh_a)\}_{a\in I})$ be a left $\Sigma$-module. A
	left \emph{$\sigmatau$-connection on $M$} is a map
	$\nabla:\TSigma\times M\to M$ satisfying
	\begin{align*}
		&\nabla_{X}(m_{1} + m_{2}) = \nabla_{X}m_{1} + \nabla_{X}m_{2}\\
		&\nabla_{X}(\lambda m) = \lambda\nabla_{X}m\\
		&\nabla_{X + Y}m = \nabla_{X}m + \nabla_{Y}m\\
		&\nabla_{\lambda X}m = \lambda \nabla_{X}m\\
		&\nabla_{X_{a}}(fm) = \sigma_{a}(f)\nabla_{X_{a}}m + X_{a}(f)\hat{\tau}_{a}(m)
	\end{align*}
	for all $X,Y\in\TSigma$, $m, m_{1}, m_{2} \in M$, $\lambda\in\complex,$ $f \in \A$  and $a\in I$.
\end{definition}

\noindent
Analogously, a right $\sigmatau$-connection on a right $\Sigma$-module
$(M,\{(\sigmah_a,\tauh_a)\}_{a\in I})$ satisfies
\begin{align*}
  \nabla_{X_{a}}(mf) = \sigmah_a(m)X_a(f) + \paraa{\nabla_{X_a}m}\tau_a(f),
\end{align*}
in addition to the linearity properties.  Moreover, if $M$ is a
$\sigmatau$-bimodule and $\nabla$ is a left $\sigmatau$-connection,
as well as a right $\sigmatau$-connection, we say that $\nabla$ is a
$\sigmatau$-bimodule connection (cf. \cite{arnlind2022geometry}).  

We would like to introduce a $\sigmatau$-connection involving
symmetric $\sigmatau$-derivations. Since symmetric
$\sigmatau$-derivations satisfy two Leibniz rules and one thinks of
$\sigmatau$-connections as covariant derivatives in the direction of
the $\sigmatau$-derivation, it is natural to have a notion of
symmetric $\sigmatau$-connection satisfying two Leibniz rules in
correspondence to those of the symmetric $\sigmatau$-derivations. In
the following, we define symmetric $\sigmatau$-connections and discuss
such connections on (strongly) regular and symmetric
$\sigmatau$-algebras.

\begin{definition}
  Let $\Sigma=(\A,\{X_a\}_{a\in I})$ be a $\sigmatau$-algebra and let
  $(M,\{(\sigmah_a,\tauh_a)\}_{a\in I})$ be a left $\Sigma$-module. A
  left \emph{$\sigmatau$-connection on $M$} is called \emph{symmetric} if
  \begin{align}\label{eqn tausigma Leibniz rule for conn}
    \nabla_{X_a}(fm) = \tau_a(f)\nabla_{X_a}m + X_a(f)\sigmah_{a}(m)
  \end{align}
  for all $a\in I$ and $m\in M$.
\end{definition}

\noindent
On $\Sigma$-bimodules, it is easy to construct connections
corresponding to inner $\sigmatau$-derivations.

\begin{proposition}\label{prop:inner.bimodule.connection}
  Let $\Sigma=(\A,\{X_a\}_{a\in I})$ be a $\sigmatau$-algebra such
  that $X_a$ is an inner $(\sigma_a,\tau_a)$-derivation for all $a\in I$; i.e
  there exist $\{f_a\}_{a\in I}\subseteq\A$ such that
  \begin{align*}
    X_a(f) = f_a\tau_a(f) - \sigma_a(f)f_a
  \end{align*}
  for $a\in I$. If $(M,\{(\sigmah_a,\tauh_a)\}_{a\in I})$ is a 
  $\Sigma$-bimodule then
  \begin{align*}
    \nabla_{X_a}(m) = f_a\tauh_{a}(m)-\sigmah_{a}(m)f_a
  \end{align*}
  defines a $\sigmatau$-bimodule connection on $(M,\{(\sigmah_a,\tauh_a)\}_{a\in I})$.
\end{proposition}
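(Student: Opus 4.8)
The plan is to verify directly that the proposed map satisfies each defining property of a $\sigmatau$-bimodule connection, namely the linearity axioms together with both the left and the right twisted Leibniz rules. First I would fix an index $a\in I$, regard $\nabla_{X_a}$ as a map $M\to M$, and extend it $\complex$-linearly in the direction argument to all of $\TSigma$, using that $\TSigma$ is spanned by $\{X_a\}_{a\in I}$; this immediately yields additivity and $\complex$-homogeneity in the direction $X$. Additivity and $\complex$-homogeneity in $m$ follow at once from the $\complex$-linearity of $\sigmah_a$ and $\tauh_a$ and of left and right multiplication by $f_a$.

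The heart of the argument is the two Leibniz rules, both of which reduce to the bimodule compatibility of $\sigmah_a$ and $\tauh_a$. Recall that for a $\Sigma$-bimodule one has $\sigmah_a(fm)=\sigma_a(f)\sigmah_a(m)$ and $\tauh_a(fm)=\tau_a(f)\tauh_a(m)$ on the left, together with the analogous right-handed identities $\sigmah_a(mf)=\sigmah_a(m)\sigma_a(f)$ and $\tauh_a(mf)=\tauh_a(m)\tau_a(f)$. For the left rule I would expand
\begin{align*}
  \nabla_{X_a}(fm) = f_a\tauh_a(fm)-\sigmah_a(fm)f_a = f_a\tau_a(f)\tauh_a(m)-\sigma_a(f)\sigmah_a(m)f_a,
\end{align*}
and separately expand $\sigma_a(f)\nabla_{X_a}m + X_a(f)\tauh_a(m)$ using the inner formula $X_a(f)=f_a\tau_a(f)-\sigma_a(f)f_a$. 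The two mixed terms $\pm\sigma_a(f)f_a\tauh_a(m)$ cancel, leaving exactly the expression above, which establishes the left twisted Leibniz rule.

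The right rule is entirely parallel. Using the right-handed bimodule identities I would write $\nabla_{X_a}(mf)=f_a\tauh_a(m)\tau_a(f)-\sigmah_a(m)\sigma_a(f)f_a$, and then expand $\sigmah_a(m)X_a(f)+\paraa{\nabla_{X_a}m}\tau_a(f)$; here the two mixed terms $\pm\sigmah_a(m)f_a\tau_a(f)$ cancel, again reproducing the desired expression. Since $\nabla$ is then simultaneously a left and a right $\sigmatau$-connection, it is a $\sigmatau$-bimodule connection, as claimed.

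I do not anticipate a genuine obstacle: each Leibniz rule is a short rearrangement of four terms. The only point demanding care is the bookkeeping of sides, that is, keeping track of whether $f_a$, $\sigma_a(f)$, and $\tau_a(f)$ sit to the left or to the right of the module element, and invoking the correct (left- versus right-handed) bimodule compatibility of $\sigmah_a$ and $\tauh_a$. Arranging the cancellation of the mixed terms is precisely what forces both families of compatibility relations into play, which is why the bimodule hypothesis, rather than a merely one-sided module structure, is essential.
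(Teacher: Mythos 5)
Your proposal is correct and follows essentially the same route as the paper's proof: a direct verification of both twisted Leibniz rules by expanding $\nabla_{X_a}(fm)$ and $\nabla_{X_a}(mf)$ via the bimodule compatibility of $\sigmah_a$ and $\tauh_a$, then adding and subtracting the mixed term ($\sigma_a(f)f_a\tauh_a(m)$ on the left, $\sigmah_a(m)f_a\tau_a(f)$ on the right) to reassemble $X_a(f)$ from its inner form. The paper likewise dispatches the linearity axioms with a one-line remark, so your treatment matches it in both substance and level of detail.
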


\begin{proof}
It is easy to show linearity in $\TSigma$ and $M$ is satisfied.
Let us show  that $\nabla$ satisfies both the left and the right twisted Leibniz rule. For $f \in \A$ and $m \in M,$ one has
\begin{eqnarray*}
&&\nabla_{X_{a}}(fm) = f_{a}\tauh_{a}(fm) - \sigmah_{a}(fm)f_{a} = f_{a}\tau_{a}(f)\tauh_{a}(m) - \sigma_{a}(f)\sigmah_{a}(m)f_{a}\\
&&= \sigma_{a}(f)f_{a}\tauh_{a}(m) -  \sigma_{a}(f)\sigmah_{a}(m)f_{a} + f_{a}\tau_{a}(f)\tauh_{a}(m) - \sigma_{a}(f)f_{a}\tauh_{a}(m)\\
&& = \sigma_{a}(f)(f_{a}\tauh_{a}(m) - \sigmah_{a}(m)f_{a}) + (f_{a}\tau_{a}(f) - \sigma_{a}(f)f_{a})\tauh_{a}(m)\\
&& = \sigma_{a}(f)\nabla_{X_{a}}(m) + X_{a}(f)\tauh_{a}(m),
\end{eqnarray*}
showing that $\nabla$ satisfies the left twisted Leibniz rule. In a similar manner, Let us check that $\nabla$ satisfies the right twisted Leibniz rule. One has
\begin{eqnarray*}
&&\nabla_{X_{a}}(mf) = f_{a}\tauh_{a}(mf) - \sigmah_{a}(mf)f_{a} = f_{a}\tauh_{a}(m)\tau_{a}(f) - \sigmah_{a}(m)\sigma_{a}(f)f_{a}\\
&&= \sigmah_{a}(m)f_{a}\tau_{a}(f) - \sigmah_{a}(m)\sigma_{a}(f)f_{a} + f_{a}\tauh_{a}(m)\tau_{a}(f) - \sigmah_{a}(m)f_{a}\tau_{a}(f)\\
&& = \sigmah_{a}(m)(f_{a}\tau_{a}(f) - \sigma_{a}(f)f_{a}) + (f_{a}\tauh_{a}(m) - \sigmah_{a}(m)f_{a})\tau_{a}(f)\\
&& = \sigmah_{a}(m)X_{a}(f) + \nabla_{X_{a}}(m)\tau_{a}(f).\qedhere
\end{eqnarray*}
\end{proof}

\noindent
Now, let $(\A,\{X_a\}_{a\in I})$ be a symmetric strongly regular
  $\sigmatau$-algebra; it follows from Proposition~\ref{str reg symm
    der} that there exists $\{f_a\}_{a\in I}\subseteq\A$ such that
  $[f_a,\sigma_a(f)]=[f_a,\tau_a(f)]=0$  and
  \begin{align*}
    X_a(f) = f_a\delta_a(f) = f_a\paraa{\tau_a(f)-\sigma_a(f)} 
  \end{align*}
  for $f\in\A$. Given a (left) $\Sigma$-module
  $(M,\{\sigmah_a,\tauh_a\}_{a\in I})$ one can easily check that
  \begin{align*}
    \nabla_{X_a}m = f_a\hat{\delta}_a(m) = f_a\paraa{\tauh_a(m)-\sigmah_a(m)}
  \end{align*}
  defines a symmetric left $\sigmatau$-connection on
  $(M,\{\sigmah_a,\tauh_a\}_{a\in I})$. The following result states
  that this every symmetric $\sigmatau$-connection on a strongly
  regular symmetric $\sigmatau$-algebra is of this form. 

\begin{proposition}\label{gen conn}
  Let $\Sigma=(\A, \{X_{a}\}_{a\in I})$ be a strongly regular
  symmetric $\sigmatau$-algebra and write
  \begin{align*}
    X_a(f) = f_a\delta_a(f) = f_a\paraa{\tau_a(f)-\sigma_a(f)}
  \end{align*}
  for $f_a\in\A$ such that $[f_{a}, \tau_{a}(f)] = [f_{a}, \sigma_{a}(f)] = 0.$ If $\nabla$ is a (left) symmetric
  $(\sigma, \tau)$-connection on a $\Sigma$-module
  $(M,\{\sigmah_a,\tauh_a\}_{a\in I})$ then
  \begin{eqnarray*}
    \nabla_{X_{a}}(m) = f_{a}\hat{\delta}_{a}(m) = f_a\paraa{\tauh(m)-\sigmah(m)}
  \end{eqnarray*}
  for $m\in M$ and $a\in I$.
\end{proposition}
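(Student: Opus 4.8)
The plan is to exploit the fact that a symmetric $\sigmatau$-connection satisfies two Leibniz rules simultaneously, and then to cancel an invertible element using strong regularity. This mirrors exactly the argument used at the level of derivations in Lemma~\ref{lemma:Xd.dX} and Proposition~\ref{str reg symm der}.

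First I would write down both Leibniz rules for $\nabla_{X_a}$ applied to $fm$. As a left $\sigmatau$-connection, $\nabla$ satisfies $\nabla_{X_a}(fm)=\sigma_a(f)\nabla_{X_a}m+X_a(f)\tauh_a(m)$, while symmetry gives, via \eqref{eqn tausigma Leibniz rule for conn}, $\nabla_{X_a}(fm)=\tau_a(f)\nabla_{X_a}m+X_a(f)\sigmah_a(m)$. Subtracting these two expressions for the same element $\nabla_{X_a}(fm)$ cancels the left-hand side and yields $0=\big(\sigma_a(f)-\tau_a(f)\big)\nabla_{X_a}m+X_a(f)\big(\tauh_a(m)-\sigmah_a(m)\big)$, that is, $\delta_a(f)\nabla_{X_a}m=X_a(f)\hat{\delta}_a(m)$ for all $f\in\A$ and $m\in M$. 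This is precisely the module-level analogue of the identity $X(f)\delta(g)=\delta(f)X(g)$ from Lemma~\ref{lemma:Xd.dX}.

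Next I would substitute the inner form $X_a(f)=f_a\delta_a(f)$. Since $f_a$ commutes with both $\tau_a(f)$ and $\sigma_a(f)$, it commutes with $\delta_a(f)=\tau_a(f)-\sigma_a(f)$, so that $X_a(f)=f_a\delta_a(f)=\delta_a(f)f_a$. The displayed identity then becomes $\delta_a(f)\nabla_{X_a}m=\delta_a(f)\,f_a\hat{\delta}_a(m)$, equivalently $\delta_a(f)\big(\nabla_{X_a}m-f_a\hat{\delta}_a(m)\big)=0$ for every $f\in\A$.

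Finally, invoking strong regularity of the pair $(\sigma_a,\tau_a)$, I would pick $g_a\in\A$ with $\delta_a(g_a)$ invertible, set $f=g_a$, and multiply on the left by $\delta_a(g_a)^{-1}$ to obtain $\nabla_{X_a}m=f_a\hat{\delta}_a(m)=f_a\big(\tauh_a(m)-\sigmah_a(m)\big)$, as claimed. I do not anticipate a genuine obstacle: the only step requiring care is the commutation of $f_a$ with $\delta_a(f)$, which lets $\delta_a(f)$ be factored to the left so the invertible element $\delta_a(g_a)$ can be cancelled. This is exactly what the hypothesis $[f_a,\tau_a(f)]=[f_a,\sigma_a(f)]=0$ (guaranteed by Proposition~\ref{str reg symm der}) provides, so the argument closes cleanly.
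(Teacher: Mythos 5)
Your proposal is correct and follows essentially the same route as the paper's proof: subtract the two Leibniz rules to get $\delta_a(f)\nabla_{X_a}m=X_a(f)\hat{\delta}_a(m)$, use the commutation hypothesis to rewrite $X_a(f)=\delta_a(f)f_a$ and factor out $\delta_a(f)$, then cancel the invertible element $\delta_a(g_a)$ supplied by strong regularity. No gaps.
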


\begin{proof}
  Since $\nabla$ is assumed to be symmetric, one has
  \begin{align*}
    0 &= \nabla_{X_{a}}(fm) - \nabla_{X_{a}}(fm)\\
      &= \sigma_{a}(f)\nabla_{X_{a}}m + X_{a}(f)\tauh_{a}(m) - \tau_{a}(f)\nabla_{X_{a}}m - X_{a}(f)\sigmah_{a}(m) \\
      &= (\sigma_{a}(f) - \tau_{a}(f))\nabla_{X_{a}}m + X_{a}(f)(\tauh_{a}(m) - \sigmah_{a}(m))\\
      &= -\delta_{a}(f)\nabla_{X_{a}}m + X_{a}(f)\deltah_{a}(m).
  \end{align*}
  Using that $X_a(f)=f_a\delta_a(f)$ together with $[f_a,\delta_a(f)]=0$, it follows that
  \begin{align}\label{eq:delta.f.nabla}
    \delta_a(f)\paraa{\nabla_{X_a}m-f_a\deltah_a(m)} = 0
  \end{align}
  for $f\in\A$ and $m\in M$. Since $\sigmatau$ is a strongly regular
  pair, there exists $g_a\in\A$ such that $\delta(g_a)$ is
  invertible. Thus, setting $f=g_a$ in \eqref{eq:delta.f.nabla} gives
  \begin{align*}
    \nabla_{X_a}m = f_a\deltah_a(m)
  \end{align*}
  for $m\in M$.
\end{proof}

\noindent
Let us summarize these results in the following corollary.

\begin{corollary}\label{cor:unique.symmetric.connection}
  Let $\Sigma=(\A, \{X_{a}\}_{a\in I})$ be a strongly regular
  symmetric $\sigmatau$-algebra and let
  $(M,\{\sigmah_a,\tauh_a\}_{a\in I})$ be a left $\Sigma$-module. Then
  there exists a unique symmetric left $\sigmatau$-connection on
  $(M,\{\sigmah_a,\tauh_a\}_{a\in I})$.
\end{corollary}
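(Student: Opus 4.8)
The plan is to establish existence and uniqueness separately, since both halves are essentially contained in the material immediately preceding the statement. For existence, I would exhibit the candidate connection explicitly by setting
\begin{align*}
  \nabla_{X_a}m = f_a\hat{\delta}_a(m) = f_a\paraa{\tauh_a(m)-\sigmah_a(m)},
\end{align*}
where $\{f_a\}_{a\in I}\subseteq\A$ are the elements supplied by Proposition~\ref{str reg symm der}, so that $X_a(f)=f_a\delta_a(f)$ and $[f_a,\sigma_a(f)]=[f_a,\tau_a(f)]=0$ for all $f\in\A$. Linearity in $M$ is immediate from the $\complex$-linearity of $\sigmah_a$ and $\tauh_a$, and linearity in $\TSigma$ is obtained by extending the assignment $X_a\mapsto\nabla_{X_a}$ linearly to the span of the $\{X_a\}$.

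The substantive check for existence is that this $\nabla$ obeys both twisted Leibniz rules. First I would compute, using $\sigmah_a(fm)=\sigma_a(f)\sigmah_a(m)$ and $\tauh_a(fm)=\tau_a(f)\tauh_a(m)$,
\begin{align*}
  \nabla_{X_a}(fm) = f_a\paraa{\tau_a(f)\tauh_a(m)-\sigma_a(f)\sigmah_a(m)}.
\end{align*}
Pushing $f_a$ past $\sigma_a(f)$ via $[f_a,\sigma_a(f)]=0$ rewrites this as $\sigma_a(f)\nabla_{X_a}m+X_a(f)\tauh_a(m)$, the defining left Leibniz rule; pushing $f_a$ past $\tau_a(f)$ via $[f_a,\tau_a(f)]=0$ rewrites the same expression as $\tau_a(f)\nabla_{X_a}m+X_a(f)\sigmah_a(m)$, the symmetric Leibniz rule \eqref{eqn tausigma Leibniz rule for conn}. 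Hence $\nabla$ is a symmetric left $\sigmatau$-connection, establishing existence.

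For uniqueness, I would simply invoke Proposition~\ref{gen conn}: any symmetric left $\sigmatau$-connection $\nabla'$ on $(M,\{\sigmah_a,\tauh_a\}_{a\in I})$ necessarily satisfies $\nabla'_{X_a}m=f_a\hat{\delta}_a(m)$ for all $a\in I$ and $m\in M$, which is exactly the connection constructed above. Thus any two symmetric left $\sigmatau$-connections agree on the generators $X_a$, and by linearity in the first argument they agree on all of $\TSigma$, so the symmetric connection is unique.

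The only point requiring care is the passage from ``defined on the generators $X_a$'' to ``defined on $\TSigma$'': since $\TSigma$ is the span of $\{X_a\}$, one must ensure the assignment respects any linear relations among the $X_a$. Because the $f_a$, and hence $\nabla_{X_a}$, are uniquely determined by $X_a$ through Proposition~\ref{str reg symm der}, no inconsistency arises, and this is the same linear extension already implicit in the definition of a $\sigmatau$-connection. I therefore expect no genuine obstacle: the real computational content is the two-line Leibniz verification above, while uniqueness is a direct citation of Proposition~\ref{gen conn}.
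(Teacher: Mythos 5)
Your proposal is correct and follows essentially the same route as the paper: existence via the explicit connection $\nabla_{X_a}m=f_a\hat{\delta}_a(m)$ built from the elements $f_a$ supplied by Proposition~\ref{str reg symm der} (the paper presents this verification in the paragraph preceding Proposition~\ref{gen conn}), and uniqueness by direct appeal to Proposition~\ref{gen conn}. Your two Leibniz computations using $[f_a,\sigma_a(f)]=[f_a,\tau_a(f)]=0$ are exactly the intended check, so there is nothing to add.
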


\section{Curvature}

\noindent
Curvature of $\sigmatau$-connections was defined in
\cite{arnlind2022geometry}. In this Section, we explore the
possibility of curvatures of symmetric $\sigmatau$-connections to
satisfy a twisted linearity condition. In classical differential
geometry, curvature of connections are discussed with reference to the
Lie-algebra structure induced by vector fields on a manifold. In the
case of $\sigmatau$-connections, the curvature is discussed with
reference to the $\sigmatau$-Lie algebra structure. Let us recall a
few definitions.
\begin{definition}
  Let $\Sigma = (\A, \{X_{a}\}_{a \in I})$ be a $\sigmatau$-algebra
  and let $T\Sigma$ denote the complex vector space generated by
  $\{X_{a}\}_{a \in I}.$ Given a $\complex$-bilinear map
  $R: T\Sigma \otimes_{\complex} T\Sigma \to T\Sigma
  \otimes_{\complex} T\Sigma,$ we say that $(T\Sigma, R)$ is a
  $\sigmatau$-\it{Lie algebra} if
  \begin{itemize}
  \item[(1)] $R^{2} = \mbox{id}_{T\Sigma \otimes_{\complex} T\Sigma},$
  \item[(2)] $m(X_{a} \otimes_{\complex} X_{b} - R(X_{a} \otimes_{\complex} X_{b})) \in T\Sigma,$
  \end{itemize}
  for all $a, b \in I,$ where
  $m :T\Sigma \otimes_{\complex} T\Sigma \to \mbox{End}_{\complex}(A)$
  denotes the composition map given by
  $m(X_{a} \otimes_{\complex} X_{b}) =X_{a} \circ X_{b}.$
\end{definition}

\noindent Given a $\sigmatau$-Lie algebra $(\TSigma,R)$ one introduces the
bracket $[\cdot,\cdot]_R:\TSigma\times\TSigma\to\TSigma$ as
\begin{align*}
  [X,Y]_R = m(X_{a} \otimes_{\complex} X_{b} - R(X_{a} \otimes_{\complex} X_{b}))
\end{align*}
for $X,Y\in\TSigma$. Moreover, if $I$ is a finite set, one introduces
the components of $R$, as well as structure constants
$C_{ab}^p\in\complex$, as
\begin{align*}
  [X_a,X_b]_R = R_{ab}^{pq}X_p\otimes X_q = C_{ab}^pX_p
\end{align*}
(where summation over repeated indices is implied).  Now, let us
recall the definition of curvature with respect to a $\sigmatau$-Lie
algebra structure on $\TSigma$.  

\begin{definition}
  Let $\Sigma = (\A, \{X_{a}\}_{a \in I})$ be a $\sigmatau$-algebra,
  such that $(T\Sigma, R)$ is a $\sigmatau$-Lie algebra, and let
  $(M, \{(\sigmah_{a}, \tauh_{a})\}_{a \in I})$ be a
  $\Sigma$-module. Given a $\sigmatau$-connection $\nabla,$ the
  curvature of $\nabla$ is defined as
  \begin{eqnarray*}
    \mbox{Curv}(X, Y)n = m_{\nabla}(X \otimes_{\complex} Y - R(X \otimes_{\complex} Y))n - \nabla_{[X, Y]_{R}}n
  \end{eqnarray*}
  for $n \in M$ and $X, Y \in T\Sigma,$ where
  $m_{\nabla}(X \otimes_{\complex} Y) = \nabla_{X} \circ \nabla_{Y}.$
\end{definition}

  \noindent In components, the curvature can be written as 
\begin{eqnarray*}
	\mbox{Curv}(X_{a}, X_{b}) = \nabla_{X_{a}}  \circ \nabla_{X_{b}} - R_{ab}^{pq} \nabla_{X_{p}} \circ \nabla_{X_{q}} - C_{ab}^{p}\nabla_{X_{p}}.
\end{eqnarray*}

\noindent
Let us now discuss the linearity properties of curvature of
$\sigmatau$-connections. In general, the curvature of
$\sigmatau$-connections does not satisfy any linearity property. The
linearity property of curvature can be satisfied depending on how the
map $R$ on the $\sigmatau$-Lie algebra is defined, how the
endomorphisms $\sigma_{a}$'s and $\tau_{a}$'s on the algebra interact
and how the maps $\tauh_{a}$'s on the $\Sigma$-module interact with
the connection. For example, for $f \in \A$ and $m \in M,$ one has
\begin{align*}
  \curv(X_{a}, X_{b})(fm)
  &= \nabla_{X_{a}}(\nabla_{X_{b}}(fm)) - R_{ab}^{pq}\nabla_{X_{p}}(\nabla_{X_{q}}(fm))
    - C_{ab}^{p}\nabla_{X_{p}}(fm)\\
  &= \nabla_{X_{a}}(\sigma_{b}(f)\nabla_{X_{b}}(m) + X_{b}(f)\tauh_{b}(m))
    - R_{ab}^{pq}\nabla_{X_{p}}(\sigma_{q}(f)\nabla_{X_{q}}(m)\\
  &\qquad+ X_{q}(f)\tauh_{q}(m))
  - C_{ab}^{p}(\sigma_{p}(f)\nabla_{X_{p}}(m) + X_{p}(f)\tauh_{p}(m))\\
  & = \sigma_{a}(\sigma_{b}(f))\nabla_{X_{a}}(\nabla_{X_{b}}(m))
    + X_{a}(\sigma_{b}(f))\tauh_{a}(\nabla_{X_{b}}(m))\\
  &\qquad + \sigma_{a}(X_{b}(f))\nabla_{X_{a}}(\tauh_{a}(m)) + X_{a}(X_{b}(f))\tauh_{a}(\tauh_{b}(m))\\
  &\qquad - R_{ab}^{pq}(\sigma_{p}(\sigma_{q}(f))\nabla_{X_{p}}(\nabla_{X_{q}}(m)) + X_{p}(\sigma_{q}(f))\tauh_{p}(\nabla_{X_{q}}(m)))\\
  &\qquad - R_{ab}^{pq}(\sigma_{p}(X_{q}(f)) + X_{p}(X_{q}(f))\tauh_{p}(\tauh_{q}(m)))\\
  &\qquad - C_{ab}^{p}(\sigma_{p}(f)\nabla_{X_{p}}(m) + X_{p}(f)\tauh_{p}(m)).
\end{align*}
Despite that the curvature of a general $\sigmatau$-connection potrays no linearity property in general, for specific $\sigmatau$-Lie algebra or specific $\sigmatau$-connections, the curvature can satisfy a linearity property. For example in \cite{arnlind2022geometry},  the curvature of a $\sigmatau$-connection on a matrix algebra satisfies a certain twisted linearity property. To this effect, let us consider curvature of a general $\sigmatau$-connection with reference to a specific $\sigmatau$-Lie algebra. Assume that 
\begin{eqnarray*}
	R_{ab}^{pq} = \delta_{a}^{q}\delta_{b}^{p} = \left\{\begin{array}{ll} 
		1 & \mbox {if}\quad a = q, b = p,\\ 
		0 & \mbox{otherwise,}
	\end{array}
	\right.\\
\end{eqnarray*}
and  $C_{ab}^{p}= 0$ for all $a, b \in I.$ Then the bracket on the generators of $(\TSigma, R)$ is 
\begin{eqnarray}\label{eqn comm rel curv}
	[X_{a}, X_{b}]_{R} = X_{a}\circ X_{b} - X_{b}\circ X_{a} = 0,
\end{eqnarray}
implying that the curvature of a $\sigmatau$-connection $\nabla$ can now be written as
  \begin{eqnarray*}
	\mbox{Curv}(X_{a}, X_{b}) = \nabla_{X_{a}}\circ\nabla_{X_{b}} - \nabla_{X_{b}}\circ\nabla_{X_{a}}.
\end{eqnarray*}

\begin{proposition}\label{prop twisted lin curv}
	Let $\Sigma = (\A, \{X_{a}\}_{a\in I})$ be a $\sigmatau$-algebra such that $(\TSigma, R)$ is the $\sigmatau$-Lie algebra with bracket
	\begin{eqnarray*}
		[X_{a}, X_{b}]_{R} = X_{a} \circ X_{b} - X_{b}\circ X_{a} = 0
	\end{eqnarray*}
and let $(M, \{(\sigmah_{a}, \tauh_{a})\}_{a\in I})$ be a left  $\Sigma$-module. If $\nabla:\TSigma\times M\to M$ is a left $\sigmatau$-connection and 
\begin{eqnarray}\label{comm rel twist lin cond}
	&&[\sigma_{a}, \sigma_{b}] = 0 = [\sigma_{a}, X_{b}],\\ \nonumber
	&&[\tauh_{a}, \tauh_{b}] = 0 = [\tauh_{a}, \nabla_{X_{b}}]
\end{eqnarray}
for all $a, b \in I,$ then
\begin{eqnarray}\label{eqn twist linear curv}
	\curv(X_{a}, X_{b}(fm)) = \sigma_{a}(\sigma_{b}(f))\curv(X_{a}, X_{b})m
\end{eqnarray}
for $f \in \A$ and $m\in M.$
\end{proposition}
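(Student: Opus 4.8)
The plan is to begin from the simplified curvature expression available under the stated bracket hypothesis, namely $\curv(X_a,X_b) = \nabla_{X_a}\circ\nabla_{X_b} - \nabla_{X_b}\circ\nabla_{X_a}$, and to expand $\curv(X_a,X_b)(fm)$ by applying the left twisted Leibniz rule repeatedly. Concretely, I first write $\nabla_{X_b}(fm) = \sigma_b(f)\nabla_{X_b}m + X_b(f)\tauh_b(m)$ and then apply $\nabla_{X_a}$ to each summand, treating $\sigma_b(f)$ and $X_b(f)$ as elements of $\A$; this yields four terms, and the interchanged computation for $\nabla_{X_b}(\nabla_{X_a}(fm))$ yields four more. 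This is exactly the expansion displayed before the proposition, now specialized to the present $R$. The goal is then to sort these eight terms into the single surviving term $\sigma_a(\sigma_b(f))\curv(X_a,X_b)m$ together with three cancelling pairs.

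The two terms carrying the double endomorphism, $\sigma_a(\sigma_b(f))\nabla_{X_a}(\nabla_{X_b}m)$ and $-\sigma_b(\sigma_a(f))\nabla_{X_b}(\nabla_{X_a}m)$, combine into the desired right-hand side: using $[\sigma_a,\sigma_b]=0$ to replace $\sigma_b(\sigma_a(f))$ by $\sigma_a(\sigma_b(f))$, they factor as $\sigma_a(\sigma_b(f))\parab{\nabla_{X_a}\circ\nabla_{X_b}-\nabla_{X_b}\circ\nabla_{X_a}}m = \sigma_a(\sigma_b(f))\curv(X_a,X_b)m$.

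It then remains to check that the remaining six terms cancel. The pair $X_a(X_b(f))\tauh_a(\tauh_b(m))$ and $-X_b(X_a(f))\tauh_b(\tauh_a(m))$ vanishes upon reading the bracket hypothesis $[X_a,X_b]_R = X_a\circ X_b - X_b\circ X_a = 0$ as an operator identity on $\A$, so that $X_a(X_b(f))=X_b(X_a(f))$, together with $[\tauh_a,\tauh_b]=0$. The remaining four terms cancel in two symmetric pairs: $X_a(\sigma_b(f))\tauh_a(\nabla_{X_b}m)$ matches $\sigma_b(X_a(f))\nabla_{X_b}(\tauh_a(m))$ once I move the endomorphism across $X_a$ via $[\sigma_b,X_a]=0$ and the map $\tauh_a$ past the connection via $[\tauh_a,\nabla_{X_b}]=0$, and interchanging $a$ and $b$ handles the last pair using $[\sigma_a,X_b]=0$ and $[\tauh_b,\nabla_{X_a}]=0$.

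I expect no conceptual obstacle: the argument is a term-by-term bookkeeping exercise. The only point that needs care is to apply the right hypothesis to the right factor in each cross term — the commutators $[\sigma_a,\sigma_b]$, $[\sigma_a,X_b]$ and $[X_a,X_b]$ act on the $\A$-factor, while $[\tauh_a,\tauh_b]$ and $[\tauh_a,\nabla_{X_b}]$ act on the $M$-factor — and to notice that the vanishing of the full bracket (not merely the prescribed form of $R$) is used precisely once, to produce $X_a(X_b(f))=X_b(X_a(f))$.
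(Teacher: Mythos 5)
Your proposal is correct and follows essentially the same route as the paper's proof: expand $\curv(X_a,X_b)(fm)=\nabla_{X_a}(\nabla_{X_b}(fm))-\nabla_{X_b}(\nabla_{X_a}(fm))$ into eight terms via the twisted Leibniz rule and cancel all but the $\sigma_a(\sigma_b(f))$-term using the commutation hypotheses. Your explicit bookkeeping of which commutator kills which pair (including the use of $X_a\circ X_b=X_b\circ X_a$ together with $[\tauh_a,\tauh_b]=0$ for the last pair) is exactly what the paper's condensed computation implicitly relies on.
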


\begin{proof}
Since the $\sigmatau$-Lie algebra bracket is given by \eqref{eqn comm rel curv}, the curvature is written as 
\begin{eqnarray*}
	\curv(X_{a}, X_{b}) = \nabla_{X_{a}}\circ\nabla_{X_{b}} - \nabla_{X_{b}}\circ\nabla_{X_{a}}.
\end{eqnarray*}
Now, for $f\in \A$ and $m \in M,$ and applying the commutation relations \eqref{comm rel twist lin cond}, one has
\begin{eqnarray*}\nonumber 
	&&\curv(X_{a}, X_{b})(fm) = \nabla_{X_{a}}(\nabla_{X_{b}}(fm)) - \nabla_{X_{b}}(\nabla_{X_{a}}(fm))\\ \nonumber
	&&= \nabla_{X_{a}}(\sigma_{b}(f)\nabla_{X_{b}}(m) + X_{b}(f)\tauh_{b}(m)) - \nabla_{X_{b}}(\sigma_{a}(f)\nabla_{X_{a}}(m) + X_{a}(f)\tauh_{a}(m))\\ \nonumber
	&&= \sigma_{a}(\sigma_{b}(f))\nabla_{X_{a}}(\nabla_{X_{b}}(m)) + X_{a}(\sigma_{b}(f))\tauh_{a}(\nabla_{X_{b}}(m)) + \sigma_{a}(X_{b}(f))\nabla_{X_{a}}(\tauh_{b}(m))\\ \nonumber
	&& + X_{a}(X_{b}(f))\tauh_{a}(\tauh_{b}(m))\\
	&& - \sigma_{b}(\sigma_{a}(f))\nabla_{X_{b}}(\nabla_{X_{a}}(m)) - X_{b}(\sigma_{a}(f))\tauh_{b}(\nabla_{X_{a}}(m)) - \sigma_{b}(X_{a}(f))\nabla_{X_{b}}(\tauh_{a}(m))\\ \nonumber
	&& - X_{b}(X_{a}(f))\tauh_{b}(\tauh_{a}(m))\\ \nonumber
	&&=  \sigma_{a}(\sigma_{b}(f))(\nabla_{X_{a}}(\nabla_{X_{b}}(m))
	 -\nabla_{X_{b}}(\nabla_{X_{a}}(m))) \\ \nonumber
	&&= (\sigma_{a}\circ \sigma_{b})(f))\mbox{Curv}(X_{a}, X_{b})m,
	\end{eqnarray*}
showing the twisted linearity property.
\end{proof}

\noindent
From the Proposition above, not all left $\sigmatau$-connections have
a curvature that satisfies the twisted linearity property \eqref{eqn
  twist linear curv}. The commutation relations \eqref{comm rel twist
  lin cond} of Proposition \ref{prop twisted lin curv} restricts the
left $\sigmatau$-connections to be considered due to that the
connection is required to commute with the maps $\tauh_{a}$ for all
$a,$ i.e., $[\tauh_{a}, \nabla_{X_{b}}] = 0$ for all $a, b.$ In this
Proposition, we would like curvature to satisfy the twisted linearity
property \eqref{eqn twist linear curv} without much restrictions on
the left $\sigmatau$-connection. However, this may impose additional
conditions on the endomorphisms and the $\sigmatau$-derivations. Let
us take for example, the left $\sigmatau$-connection given by
$\nabla_{X_{a}}(e_{i}) = \gamma_{ai}^{k}e_{k}$ with
$\gamma_{ai}^{k} \in \A$, defined on the free $\Sigma$-module of
Example \ref{free Sig mod}. One has
\begin{eqnarray*}
	&&[\tauh_{a}, \nabla_{X_{b}}](m^{i}e_{i}) = \tauh_{a}(\nabla_{X_{b}}(m^{i}e_{i})) - \nabla_{X_{b}}(\tau_{a}(m^{i})e_{i})\\ 
	&& = \tau_{a}(\sigma_{b}(m^{i}))\tau_{a}(\gamma_{bi}^{k})e_{k} + \tau_{a}(X_{b}(m^{i}))e_{i} - \sigma_{b}(\tau_{a}(m^{i}))\gamma_{bi}^{k}e_{k} - X_{b}(\tau_{a}(m^{i}))e_{i}\\
	&&=(\tau_{a}(\sigma_{b}(m^{i}))\tau_{a}(\gamma_{bi}^{k}) - \sigma_{b}(\tau_{a}(m^{i}))\gamma_{bi}^{k} + \tau_{a}(X_{b}(m^{k})) - X_{b}(\tau_{a}(m^{k})))e_{k},
\end{eqnarray*} 
which does not equal zero naturally without imposing further
conditions. In this example, one may choose the additional condition
$[\tau_{a}, \sigma_{b}] = 0 = [\tau_{a}, X_{b}]$ and set
$\tau_{a}(\gamma_{bi}^{k}) = \gamma_{bi}^{k},$ for all $a, b.$ To
stick to the conditions in Proposition \ref{prop twisted lin curv} and
to the additional conditions imposed by the example above, it is
natural to choose $\tau_{a} = \mbox{Id}$ to get a curvature that
satisfies the twisted linearity property. In the following Corollary,
the curvature of all left $(\sigma, \mbox{Id})$-connections for the
said $\sigmatau$-Lie algebra satisfies the twisted linearity property.
 
\begin{corollary}\label{cor twisted lin curv}
  Let $\Sigma = (\A, \{X_{a}\}_{a\in I}$ be a $\sigmatau$-algebra such
  that $(\TSigma, R)$ is a $\sigmatau$-Lie algebra with bracket
  \begin{eqnarray*}
    [X_{a}, X_{b}]_{R} = X_{a}\circ X_{b} - X_{b}\circ X_{a} = 0
  \end{eqnarray*}
  such that $X_{a}$ is a $(\sigma_{a}, \mbox{Id})$-derivation satisfying 
  \begin{eqnarray}\label{cor comm rel twist lin cond}
    [\sigma_{a}, \sigma_{b}] = 0 = [\sigma_{a}, X_{b}]
  \end{eqnarray}
  for $a, b \in I.$ Then it follows that the curvature of all left
  $(\sigma, \mbox{Id})$-connections on the $\Sigma$-module
  $(M, \{(\sigmah_{a}, \mbox{Id})\}_{a\in I})$ satisfy the twisted
  linearity property
  \begin{eqnarray*}
    \curv(X_{a}, X_{b})(fm) = \sigma_{a}\paraa{\sigma_{b}(f)}\curv(X_{a}, X_{b})m.
  \end{eqnarray*}
  for $f\in\A$ and $m\in M$
\end{corollary}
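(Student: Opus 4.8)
The plan is to obtain this as a direct specialization of Proposition~\ref{prop twisted lin curv}: the entire content is to verify that the hypotheses of that Proposition are met once one takes $\tauh_a=\Id$, so that no new computation is required beyond what was already carried out there.

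First I would record that the $\Sigma$-module in question is $(M,\{(\sigmah_a,\Id)\}_{a\in I})$, so $\tauh_a=\Id$ for every $a\in I$. Because the identity map commutes with every $\complex$-linear endomorphism of $M$, the relations $[\tauh_a,\tauh_b]=[\Id,\Id]=0$ and $[\tauh_a,\nabla_{X_b}]=[\Id,\nabla_{X_b}]=0$ hold automatically for all $a,b\in I$. This is exactly the second line of the commutation relations \eqref{comm rel twist lin cond} demanded by Proposition~\ref{prop twisted lin curv}, and it now holds with no restriction on the connection $\nabla$ whatsoever.

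Next I would observe that the first line of \eqref{comm rel twist lin cond}, namely $[\sigma_a,\sigma_b]=0=[\sigma_a,X_b]$, is precisely the standing hypothesis \eqref{cor comm rel twist lin cond} of the Corollary, while the assumption that $(\TSigma,R)$ is the $\sigmatau$-Lie algebra with bracket $[X_a,X_b]_R=X_a\circ X_b-X_b\circ X_a=0$ matches the corresponding assumption of Proposition~\ref{prop twisted lin curv}. With all hypotheses of that Proposition in force, I would simply invoke its conclusion to obtain $\curv(X_a,X_b)(fm)=\sigma_a(\sigma_b(f))\curv(X_a,X_b)m$ for all $f\in\A$ and $m\in M$, which is the asserted twisted linearity.

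There is no genuine obstacle here, since the statement is a pure specialization; the one point worth emphasizing is conceptual rather than computational. In Proposition~\ref{prop twisted lin curv} the conditions $[\tauh_a,\tauh_b]=0=[\tauh_a,\nabla_{X_b}]$ are real constraints that restrict which connections are admissible, whereas the choice $\tau_a=\Id$ (and hence $\tauh_a=\Id$) trivializes them, so that the twisted linearity property holds for \emph{every} left $(\sigma,\Id)$-connection on $(M,\{(\sigmah_a,\Id)\}_{a\in I})$ without any further assumption on $\nabla$ beyond being a left connection.
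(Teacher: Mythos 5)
Your proposal is correct and follows exactly the route the paper intends: the Corollary is stated as a specialization of Proposition~\ref{prop twisted lin curv}, where taking $\tauh_a=\Id$ makes the conditions $[\tauh_a,\tauh_b]=0=[\tauh_a,\nabla_{X_b}]$ automatic and the remaining hypotheses $[\sigma_a,\sigma_b]=0=[\sigma_a,X_b]$ and the vanishing bracket are assumed outright. Your closing observation that this removes all restrictions on the connection is precisely the point the paper emphasizes in the discussion preceding the Corollary.
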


\noindent
 Let us now discuss the curvature of the unique symmetric
 $\sigmatau$-connection of Proposition \ref{gen conn}. Recall that the
 unique connection $\nabla_{X_{a}}(m) = f_{a}\deltah_{a}(m)$ is
 defined on a strongly regular symmetric $\sigmatau$-algebra
 $(\A, \{X_{a}\}_{a\in I})$ where $X_{a} = f_{a}\delta_{a}$ are inner
 $\sigmatau$-derivations for $a \in I.$ Choosing
 $\sigmah_{a} = \tauh_{a} = \mbox{Id}$ leaves the connection
 trivial. We would like to focus on the special case when
 $\sigmah_{a} \neq \tauh_{a} = \mbox{Id}$ and $f_{a} \in \complex,$
 for $a \in I.$ To this end, let $\Sigma = (\A, \{X_{a}\}_{a\in I})$ be the
 strongly regular symmetric $\sigmatau$-algebra where
 $X_{a} = f_{a}\delta_{a} = f_{a}(\mbox{Id} - \sigma_{a})$ for
 $f_{a} \in \complex$ and $a \in I.$
In this case the vector space $\TSigma$ is spanned by $\{\delta_a\}_{a\in I}$.
 Define
 $R:\TSigma \otimes_{\complex} \TSigma \to \TSigma \otimes_{\complex}
 \TSigma$ by
 \begin{eqnarray*}
   R(X_{a}\otimes_{\complex} X_{b}) = R(f_{a}\delta_{a} \otimes_{\complex} f_{b}\delta_{b}) = f_{b}f_{a}(\delta_{b} \otimes_{\complex}\delta_{a})
 \end{eqnarray*}
 implying that $R^{2} = \mbox{Id}.$ Assuming that
 $[\sigma_{a}, \sigma_{b}] = 0,$ one has
 \begin{eqnarray*}
   [X_{a}, X_{b}]_{R} = m(X_{a} \otimes_{\complex} X_{b} - R(X_{a} \otimes_{\complex} X_{b})) = f_{a}f_{b}(\delta_{a} \circ \delta_{b} - \delta_{b} \circ \delta_{a}) = 0,
 \end{eqnarray*}
 showing that the bracket, $[X_{a}, X_{b}]_{R} = 0,$ on the
 $\sigmatau$-Lie algebra $(\TSigma, R).$ Let us compute the curvature
 of the unique connection for the $\sigmatau$-algebra described
 above. Since $[X_{a}, X_{b}]_{R} = 0,$ then
 $\nabla_{[X_{a}, X_{b}]_{R}]}(m) = 0$ for $m\in M$ and
 $X_{a}, X_{b} \in \TSigma,$ and the curvature of the unique
 connection is
\begin{eqnarray*}
\curv(X_{a}, X_{b})m = f_{a}f_{b}(\deltah_{a} \circ \deltah_{b} - \deltah_{b}\circ \deltah_{a})(m).
\end{eqnarray*}
We remark that the curvature above is not zero in general because
$[\sigma_{a}, \sigma_{b}] = 0$ does not imply
$[\sigmah_{a}, \sigmah_{b}] = 0.$ Given Corollary \ref{cor twisted lin
  curv}, one checks that the commutation relations \eqref{cor comm rel
  twist lin cond} are satisfied on the $\sigmatau$-algebra and since
the bracket $[X_{a}, X_{b}]_{R} = 0,$ one concludes that the unique
$(\sigma, \mbox{Id})$-connection satisfies the twisted linearity
condition \eqref{eqn twist linear curv}.

Let us now review the example of a $\sigmatau$-connection over the
matrix algebra $\mat$, constructed in \cite{arnlind2022geometry}, and
relate it to symmetric $\sigmatau$-connections.

Consider the matrix algebra $\mat$ and choose invertible
$(N\times N)$-matrices $U_{a},$ for $ a = 1, \ldots, n$ such that
$[U_{a}, U_{b}] = 0$ for all $a, b.$ Define endomorphisms
$\sigma_{a}:\mat\to\mat$ by
  \begin{eqnarray*}
    \sigma_{a}(A) = U_{a}AU_{a}^{-1}
  \end{eqnarray*}
  and symmetric (inner) $(\sigma_{a}, \mbox{Id})$-derivations
  $X_{a}:\mat\to\mat$ by
  \begin{eqnarray*}
    X_{a}(A) = A - U_{a}AU_{a}^{-1}
  \end{eqnarray*}
  for $A\in \mat$ and $a= 1, \ldots, n.$ As described in
  \cite{arnlind2022geometry}, $\Sigma = (\mat, \{X_{a}\}_{a = 1}^{n})$
  is a $\sigmatau$-algebra and $(\TSigma, R)$, with
  $R(X\otimes Y)=Y\otimes X$, is a $\sigmatau$-Lie algebra such that
  \begin{align*}
    (X_{a} \circ X_{b})(A)
    &= X_{a}(X_{b}(A)) = X_{a}(A - U_{b}AU_{b}^{-1})\\
    & = (A - U_{b}AU_{b}^{-1}) - U_{a}(A - U_{b}AU_{b}^{-1})U_{a}^{-1} \\
    & = (A - U_{a}AU_{a}^{-1}) - U_{b}(A - U_{a}AU_{a}^{-1})U_{b}^{-1}\\
    &= X_{b}(A - U_{a}AU_{a}^{-1})= (X_{b} \circ X_{a})(A)
  \end{align*}
  showing that $[X_{a}, X_{b}]_{R} = 0.$

  Since $\mat$ is a free module over itself and generated by the
  identity matrix $\mathds{1}$, then
  $(\mat, \{(\sigma_{a}, \mbox{Id})\}_{a=1}^{n})$ is a free
  $\Sigma$-module. On this $\Sigma$-module, one can construct a
  $\sigmatau$-connection by setting
  $\nabla_{X_{a}}(\mathds{1}) = \Gammat_{a}$ and
  $\Gamma = \mathds{1} - \Gammat_{a}$ where $\Gammat_{a}\in\mat$
  for $a = 1, \ldots, n.$ This connection is explicitly given by
\begin{eqnarray}\label{sigmatau conn on mat}
	\nabla_{X_{a}}(A) = A - U_{a}AU_{a}^{-1}\Gamma_{a}.
\end{eqnarray}
Let us show that the left $(\sigma, \mbox{Id})$-connection
\eqref{sigmatau conn on mat} is a symmetric left
$\sigmatau$-connection.  Let us take the same $\sigmatau$-algebra
$\Sigma = (\mat, \{X_{a}\}_{a\in I})$ and the same free module $\mat.$
Now define the maps $\sigmah_{a} : \mat\to\mat$ by
$\sigmah_{a}(A) = U_{a}AU_{a}^{-1}\Gamma_{a}$ for $a = 1, \ldots, n.$ Then
$(\mat, \{(\sigmah_{a}, \mbox{Id})\}_{a =1}^{n})$ is a left
$\Sigma$-module. Define the $(\sigma, \mbox{Id})$-connection
\eqref{sigmatau conn on mat}, on this new left $\Sigma$-module
$(\mat, \{(\sigmah_{a}, \mbox{Id})\}_{a =1}^{n}).$ Since
\eqref{sigmatau conn on mat} is a left
$(\sigma, \mbox{Id})$-connection, it suffices to show that it
satisfies \eqref{eqn tausigma Leibniz rule for conn}. Let
$A, B \in \mat,$ one has
\begin{eqnarray*}
&&\nabla_{X_{a}}(AB) = AB - U_{a}ABU_{a}^{-1}\Gamma_{a}\\
&& = AB - AU_{a}BU_{a}^{-1}\Gamma_{a} + AU_{a}BU_{a}^{-1}\Gamma_{a} - U_{a}AU_{a}^{-1}U_{a}BU_{a}^{-1}\Gamma_{a}	\\
&&= A(B - U_{a}BU_{a}^{-1}\Gamma_{a}) + (A - U_{a}AU_{a}^{-1})U_{a}BU_{a}^{-1}\Gamma_{a}\\
&& = A\nabla_{X_{a}}(B) + X_{a}(A)\sigmah_{a}(B),
\end{eqnarray*}
showing that \eqref{sigmatau conn on mat} is a symmetric left
$\sigmatau$-connection on $(\mat,\{\sigmah_a,\Id\}_{a=1}^n)$.  It is shown in \cite{arnlind2022geometry}
that the curvature of this connection satisfies the twisted linearity
condition \eqref{eqn twist linear curv}. Since the bracket on
$(\TSigma, R)$ is zero, i.e., $[X_{a}, X_{b}]_{R} = 0$ for
$X_{a}, X_{b} \in \TSigma,$ the curvature of any
$\sigmatau$-connection on
$(\mat, \{(\sigmah_{a}, \mbox{Id})\}_{a=1}^{n})$ is given as
\begin{eqnarray*}
	\curv(X_{a}, X_{b}) = \nabla_{X_{a}} \circ \nabla_{X_{b}} - \nabla_{X_{b}}\circ \nabla_{X_{a}}.
\end{eqnarray*}
Explicitly, the curvature of the $(\sigma, \mbox{Id})$-connection \eqref{sigmatau conn on mat} is 
\begin{eqnarray*}
	\curv(X_{a}, X_{b})A = U_{a}U_{b}A[U_{b}^{-1}\Gamma_{b}, U_{a}^{-1}\Gamma_{a}]
\end{eqnarray*}
for $A\in\mat.$ We would like to verify that the commutation relations
\eqref{comm rel twist lin cond} is satisfied in this case. Since
$\tauh_{a} = \tau_{a} = \mbox{Id},$ the two commutation relations
$[\tauh_{a}, \tauh_{b}] = 0$ and $[\tauh_{a}, \nabla_{X_{b}}] = 0$ are
already satisfied. One needs to verify $[\sigma_{a}, \sigma_{b}] = 0$
and $[\sigma_{a}, X_{b}] = 0.$ Let $A \in \mat,$ one has
\begin{align*}
&	[\sigma_{a}, \sigma_{b}](A) = \sigma_{a}(\sigma_{b}(A)) - \sigma_{b}(\sigma_{a}(A))  = \sigma_{a}(U_{b}AU_{b}^{-1}) - \sigma_{b}(U_{a}AU_{a}^{-1})\\
& = U_{a}U_{b}AU_{b}^{-1}U_{a}^{-1} - U_{b}U_{a}AU_{a}^{-1}U_{b}^{-1} = 0,
\end{align*}
and
\begin{align*}
&[\sigma_{a}, X_{b}](A) = \sigma_{a}(X_{b}(A)) - X_{b}(\sigma_{a}(A)) = \sigma_{a}(A - U_{b}AU_{b}^{-1}) - X_{b}(U_{a}AU_{a}^{-1})\\
&= U_{a}(A -U_{b}AU_{b}^{-1})U_{a}^{-1} - U_{a}AU_{a}^{-1} + U_{b}(U_{a}AU_{a}^{-1})U_{b}^{-1}\\
& = U_{a}AU_{a}^{-1} - U_{a}U_{b}AU_{b}^{-1}U_{a}^{-1} - U_{a}AU_{a}^{-1} + U_{b}U_{a}AU_{a}^{-1}U_{b}^{-1} = 0,
\end{align*}
(using that $[U_a,U_b]=0$) showing that the commutation relations
\eqref{comm rel twist lin cond} are satisfied. Therefore, by Corollary
\ref{cor twisted lin curv}, we have verified that the left symmetric
$(\sigma, \mbox{Id})$-connection \eqref{sigmatau conn on mat} defined
on the $\Sigma$-module
$(\mat, \{(\sigmah_{a}, \mbox{Id})\}_{a =1}^{n})$ such that the
relations \eqref{comm rel twist lin cond} are satisfied and the
bracket of the $\sigmatau$-Lie algebra $(T\Sigma, R)$ satisfies
\eqref{eqn comm rel curv} has curvature satisfying the twisted
linearity property \eqref{eqn twist linear curv}.

\section{Commutative $\sigmatau$-algebras}

\noindent
In this Section, we demonstrate that the notion of regular and
strongly regular pairs on a commutative algebra can be used to
construct modules of $\sigmatau$-derivations on which curvature and
torsion of some connections can be computed.  Note that when $\A$ is
commutative, $\mbox{Der}_{\sigmatau}(\A)$ is an $\A$-module and it
consist of symmetric $\sigmatau$-derivations. For example, on a unique
factorisation domain $\A,$ the $\A$-module
$\mbox{Der}_{\sigmatau}(\A)$ is free and of rank one with generator
$\frac{\delta}{g}$ where $g$ is the greatest common divisor of
the image of $\delta$ \cite{hls:sigmaderivation}.
It is further shown
in \cite{hls:sigmaderivation} that every $\sigmatau$-derivation is of
the form $ X(f) = f_{0}\frac{\delta(f)}{g}.$ Let us recall these results.

\begin{theorem}[\cite{hls:sigmaderivation}]\label{theo HLS}
  Let $\A$ be a unique factorization domain and let
    $\sigma,\tau\in\End(\A)$ such that $\sigma\neq\tau$. Then
  $\Der_{\sigmatau}(\A)$ is a free $\A$-module of rank one generated by
  \begin{equation}\label{eq:Delta.generator}
    \Delta = \dfrac{(\tau - \sigma)}{g}: x\longmapsto \dfrac{(\tau - \sigma)(x)}{g}
  \end{equation}
  where $g = \mbox{gcd}((\tau - \sigma)(A)).$
\end{theorem}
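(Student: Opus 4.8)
The plan is to verify in turn that $\Delta$ is a well-defined element of $\Der_{\sigmatau}(\A)$, that it generates the whole module, and that it does so freely.

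Since $\sigma\neq\tau$, the map $\delta=\tau-\sigma$ is nonzero, so $\delta(\A)$ contains a nonzero element and $g=\gcd(\delta(\A))$ is a well-defined nonzero element of $\A$ (unique up to a unit). By definition of the gcd, $g$ divides $\delta(f)$ for every $f\in\A$, so each $\Delta(f)=\delta(f)/g$ genuinely lies in $\A$. To see that $\Delta$ is a $\sigmatau$-derivation, I would recall from \eqref{ex sym inner sigmatau der} that $\delta$ itself satisfies $\delta(fh)=\sigma(f)\delta(h)+\delta(f)\tau(h)$; substituting $\delta=g\Delta$ and cancelling the nonzero factor $g$ (legitimate since a UFD is an integral domain) yields the twisted Leibniz rule for $\Delta$. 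Note also that $\Delta\neq 0$, since $\delta(f)\neq 0$ for some $f$.

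Next I would establish the proportionality $X(f)\Delta(h)=\Delta(f)X(h)$ for all $f,h\in\A$ and every $X\in\Der_{\sigmatau}(\A)$. Because $\A$ is commutative, every $\sigmatau$-derivation is symmetric, so Lemma~\ref{lemma:Xd.dX} gives $X(f)\delta(h)=\delta(f)X(h)$; writing $\delta=g\Delta$ and cancelling $g$ produces the claim. Passing to the fraction field $Q=\operatorname{Frac}(\A)$, this shows that $X(f)/\Delta(f)$ is independent of the choice of $f$ with $\Delta(f)\neq 0$ (such $f$ exists as $\Delta\neq 0$); call this common value $f_0\in Q$. A short check using the proportionality handles the case $\Delta(f)=0$, so that $X(f)=f_0\,\Delta(f)$ for all $f\in\A$.

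The main obstacle is upgrading $f_0$ from $Q$ to $\A$, which is exactly where unique factorization is needed. I would first note that $\gcd(\Delta(\A))$ is a unit: from $\delta=g\Delta$ together with $\gcd(\{ga_i\})=g\,\gcd(\{a_i\})$ (up to units in a UFD), one gets $g=g\,\gcd(\Delta(\A))$, forcing $\gcd(\Delta(\A))$ to be a unit. Writing $f_0=a/b$ in lowest terms, so that $\gcd(a,b)$ is a unit, the relation $bX(f)=a\Delta(f)$ with $X(f)\in\A$ shows that $b$ divides $a\Delta(f)$, hence $b$ divides $\Delta(f)$ for all $f$; thus $b$ divides the unit $\gcd(\Delta(\A))$, so $b$ is a unit and $f_0\in\A$. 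This proves $X=f_0\Delta$, so $\Delta$ generates $\Der_{\sigmatau}(\A)$. Finally, for freeness, if $f_0\Delta=0$ then $f_0\Delta(f)=0$ for all $f$; choosing $f$ with $\Delta(f)\neq 0$ and using that $\A$ is an integral domain forces $f_0=0$. Hence $\Der_{\sigmatau}(\A)$ is free of rank one with generator $\Delta$.
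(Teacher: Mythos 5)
Your argument is correct, but note that the paper does not actually prove Theorem~\ref{theo HLS}: it is imported verbatim from \cite{hls:sigmaderivation} and used as a black box, so there is no internal proof to compare against. Your reconstruction follows the standard route of that reference. The key steps all check out: the proportionality $X(f)\Delta(h)=\Delta(f)X(h)$ follows from Lemma~\ref{lemma:Xd.dX} (every $\sigmatau$-derivation on a commutative algebra being symmetric) after cancelling $g$ in the integral domain; the valuation-by-valuation identity $\gcd(g\,S)=g\gcd(S)$ gives that $\gcd(\Delta(\A))$ is a unit; and the lowest-terms argument forces the quotient $f_0$ into $\A$. This last step is precisely where unique factorization is indispensable, and it is worth contrasting with what the paper \emph{does} prove in Section~6: there, under the extra hypothesis that the common divisor $\hat{g}$ lies in $\Im(\delta)$, one gets $f_0=X(g_0)\in\A$ immediately from $X(f)\hat{g}=\delta(f)X(g_0)$, with no fraction field and no UFD assumption needed. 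Your proof handles the harder general case where $g$ need not be attained as a value of $\delta$, which is exactly the content the paper delegates to the citation. One small point worth making explicit if you write this up: $\Der_{\sigmatau}(\A)$ carries an $\A$-module structure because $\A$ is commutative (so $Z(\A)=\A$ in the paper's Proposition on the $Z(\A)$-bimodule structure), and $f_0\Delta$ is again a $\sigmatau$-derivation for any $f_0\in\A$; together with your surjectivity and freeness arguments this completes the identification of $\Der_{\sigmatau}(\A)$ with $\A\cdot\Delta$.
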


\noindent
Note that the $\sigmatau$-derivation $\Delta$ in
\eqref{eq:Delta.generator} is not necessarily an inner
$\sigmatau$-derivation when $g$ is not invertible in $\A$. Now, let us
try to understand what happens on a commutative algebra that is not a
unique factorization domain. Let us start by showing that in case
there is a common divisor, then a $\sigmatau$-derivation has to have
a similar form as in \eqref{eq:Delta.generator}.

\begin{proposition}\label{prop X der form}
  Let $X$ be a $\sigmatau$-derivation on a commutative algebra $\A.$
  If there exists $g_{0} \in \A$ such that $\delta(g_{0})$ divides
  $\delta(f)X(g_{0})$ for all $f \in \A,$ then
  \begin{eqnarray*}
    X(f) = \dfrac{X(g_{0})\delta(f)}{\delta(g_{0})}.
  \end{eqnarray*}
\end{proposition}

\begin{proof}
Since $X$ is a symmetric $\sigmatau$-derivation, Lemma \ref{lemma:Xd.dX} implies that
\begin{eqnarray*}
 X(f)\delta(g) = \delta(f)X(g)	
\end{eqnarray*}
for $f, g \in \A.$ Now, let $g_{0} \in \A$ be such that $\delta(g_{0})$ divides $X(g_{0})\delta(f)$ for all $f\in\A,$ one has
\begin{eqnarray*}
	\dfrac{X(f)\delta(g_{0})}{\delta(g_{0})} = \dfrac{X(g_{0})\delta(f)}{\delta(g_{0})} \implies X(f) = \dfrac{X(g_{0})\delta(f)}{\delta(g_{0})},
\end{eqnarray*}
as required.
\end{proof} 

  \noindent The existence of a common divisor is in fact a
  characterization of inner $\sigmatau$-derivations, as the next
  result shows.  

\begin{corollary}\label{cor X der form}
  Let $X$ be a $\sigmatau$-derivation on a commutative algebra $\A.$
  Then $X$ is inner if and only if there exist $g_{0}\in \A$ such that
  $\delta(g_{0})$ divides $X(g_{0}).$
\end{corollary}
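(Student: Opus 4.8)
The plan is to use commutativity of $\A$ to put the inner condition into a form directly comparable with the divisibility hypothesis, and then to read off the converse from Proposition~\ref{prop X der form}. The first observation I would make is that, since $\A$ is commutative, the defining relation of an inner $\sigmatau$-derivation collapses: for $m_0\in\A$,
\[
  m_0\tau(f)-\sigma(f)m_0 = m_0\paraa{\tau(f)-\sigma(f)} = m_0\delta(f).
\]
Hence $X$ is inner if and only if there exists $m_0\in\A$ with $X(f)=m_0\delta(f)$ for all $f\in\A$. I would also record at the outset that every $\sigmatau$-derivation on a commutative algebra is symmetric, so that Lemma~\ref{lemma:Xd.dX} (giving $X(f)\delta(g)=\delta(f)X(g)$) and Proposition~\ref{prop X der form} are both available.

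For the forward implication, suppose $X$ is inner, so that $X(f)=m_0\delta(f)$. Then for \emph{any} choice of $g_0\in\A$ one has $X(g_0)=m_0\delta(g_0)$, so $\delta(g_0)$ divides $X(g_0)$ with quotient $m_0$; in particular such a $g_0$ exists, which is all that the statement requires. This direction is essentially immediate once the inner form has been rewritten as $m_0\delta(f)$.

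For the converse, suppose $g_0\in\A$ is such that $\delta(g_0)$ divides $X(g_0)$, say $X(g_0)=c\,\delta(g_0)$ for some $c\in\A$. To invoke Proposition~\ref{prop X der form} I must first check its hypothesis, namely that $\delta(g_0)$ divides $\delta(f)X(g_0)$ for every $f\in\A$; this is where the elementary but essential upgrade happens, and it is immediate since $\delta(f)X(g_0)=\delta(f)c\,\delta(g_0)=\delta(g_0)\paraa{c\,\delta(f)}$. Proposition~\ref{prop X der form} then gives
\[
  X(f)=\frac{X(g_0)\delta(f)}{\delta(g_0)}=c\,\delta(f),
\]
and using commutativity once more yields $X(f)=c\tau(f)-\sigma(f)c$, exhibiting $X$ as inner with $m_0=c$. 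The one point I would handle with care is the quotient notation $X(g_0)\delta(f)/\delta(g_0)$ inherited from Proposition~\ref{prop X der form}: it is legitimate precisely because its divisibility hypothesis has been verified, and once that is in force the identification of the quotient with the witness $c$ is forced. No genuine obstacle arises beyond this bookkeeping, so the main work of the corollary really lies in the commutativity reduction of the inner condition together with the divisibility check feeding Proposition~\ref{prop X der form}.
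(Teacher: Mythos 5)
Your proof is correct and follows essentially the same route as the paper: the converse direction verifies the divisibility hypothesis of Proposition~\ref{prop X der form}, writes $X(g_0)=c\,\delta(g_0)$, and cancels to get $X(f)=c\,\delta(f)$, while the forward direction simply reads off that an inner derivation $X(f)=m_0\delta(f)$ has $\delta(g_0)$ dividing $X(g_0)$ for any $g_0$. The only difference is presentational — you make explicit the commutative collapse $m_0\tau(f)-\sigma(f)m_0=m_0\delta(f)$ and the check that $\delta(g_0)$ divides $\delta(f)X(g_0)$, both of which the paper leaves implicit.
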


\begin{proof}
  Assume that there exist $g_{0} \in \A$ such that $\delta(g_{0})$
  divides $X(g_{0}),$ then $\delta(g_{0})$ divides $X(g_{0})\delta(f)$
  for all $f \in \A$ and by Proposition \ref{prop X der form},
  \begin{eqnarray*}
    X(f) = \dfrac{X(g_{0})\delta(f)}{\delta(g_{0})}.
  \end{eqnarray*}
  Since $\delta(g_{0})$ divides $X(g_{0})$ let $P_{0}$ be such that
  $X(g_{0}) = P_{0} \delta(g_{0}),$ one has
  \begin{eqnarray*}
    X(f) = 	\dfrac{X(g_{0})\delta(f)}{\delta(g_{0})} = \dfrac{P_{0} \delta(g_{0})\delta(f)}{\delta(g_{0})} = P_{0} \delta(f),
  \end{eqnarray*}
  showing that $X$ is inner. Conversely, assume that $X$ is inner. Then, by definition,
  $\delta(f)$ divides $X(f)$ for all $f\in \A.$ Thus there exist
  $g_{0} \in \A$ such that $\delta(g_{0})$ divides $X(g_{0}).$
\end{proof}

\noindent
Let us look at how Proposition \ref{prop X der form} applies to
$\sigmatau$-derivations on the polynomial algebra $\mathbb{K}[x]$
(which is a unique factorization domain, implying that
Theorem~\ref{theo HLS} may also be applied).  Assume that
$\sigma(p)(x) = p(qx)$ and $\tau(p)(x) = p(x)$ for
$p(x) \in \mathbb{K}[x]$ and $q \in \reals\backslash \{0\}.$ One has
\begin{eqnarray*}
  &&\delta(p) = \tau \left(\sum_{i= 0}^{N}a_{i}x^{i}\right) - \sigma\left(\sum_{i= 0}^{N}a_{i}x^{i}\right) = \sum_{i=1}^{N}(1 -q^{i})a_{i}x^{i} \\
  &&= \sum_{k = 0}^{N-1}\paraa{(1+q+\cdots+q^k)a_{k + 1}x^{k}}(1-q)x = r(x)(1-q)x,
\end{eqnarray*}
implying that $(1 - q)x$ is the greatest common divisor of
$\delta(\mathbb{K}[x]).$ From Proposition~\ref{prop X der form},
setting $g_{0} = x,$ one has $\delta(x) = (1 - q)x$ and it follows
that any $\sigmatau$-derivation $X$ on $\mathbb{K}[x]$ is of the form
\begin{equation}\label{eq:X.px}
  X = p(x)\dfrac{\delta}{(1 - q)x}
\end{equation}
for $p(x) = \mathbb{K}[x]$ determined by $X(x).$ From
  Corollary~\ref{cor X der form} it follows that the derivation $X$
  above is inner if an only if $p(x)$ is divisible by $x$, that is if
  $p(x)$ has no constant term. In other words, if $p(x)$ has a
  constant term, then the derivation $X$ in \eqref{eq:X.px} is an
  outer derivation. Furthermore, this implies that the Jackson
derivative, defined in \eqref{Jackson der}, is an outer
$\sigmatau$-derivation on $\mathbb{K}[x]$ since $p(x)=1$ in that
case.

Let us now consider a commutative algebra $\A$ (not necessarily a
  unique factorization domain) and let $\hat{g}\in \A$ be a common
  divisor of $\delta(\A)$; i.e. $\hat{g}$ divides $\delta(f)$ for all
  $f\in\A$. For each $f\in \A,$ define a linear map
$X_{f}: \A\to\A$ by
\begin{eqnarray*}
  X_{f}(a) = f\frac{\delta(a)}{\hat{g}}
\end{eqnarray*}
for all $a \in \A.$ One has
\begin{eqnarray*}
  && X_{f}(ab) = f\frac{\delta(ab)}{\hat{g}} = f\frac{\sigma(a)\delta(b) + \delta(a)\tau(b)}{\hat{g}} = \sigma(a)f\frac{\delta(b)}{\hat{g}} +  f\frac{\delta(a)}{\hat{g}}\tau(b)\\
  && = \sigma(a)X_{f}(b) + X_{f}(a)\tau(b),
 \end{eqnarray*}
 showing that $X_{f}$ is a $\sigmatau$-derivation. Since $\A$ is
 commutative, $X_{f}$ is symmetric. In analogy with Theorem~\ref{theo
   HLS}, let us show that if the common divisor actually lies in the
 image of $\delta$ and $\sigmatau$ is a regular pair, then every
 $\sigmatau$-derivation is of this form.

\begin{proposition}
  Let $\sigmatau$ be a regular pair of endomorphisms on a commutative
  algebra $\A$. If $\hat{g}$ is a common divisor of $\Im(\delta)$ such
  that $\hat{g}\in\Im(\delta)$ then $\mbox{Der}_{\sigmatau}(\A)$ is a
  free module of rank one with basis $\delta/\hat{g}$.
\end{proposition}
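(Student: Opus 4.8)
The plan is to exhibit $\Delta=\delta/\hat g$ as a single free generator of $\mbox{Der}_{\sigmatau}(\A)$ as an $\A$-module (recall that since $\A$ is commutative $\mbox{Der}_{\sigmatau}(\A)$ is an $\A$-module). First I would make sense of the quotient $\delta/\hat g$. Since $\sigmatau$ is regular, choose $f_1\in\A$ with $\delta(f_1)$ not a zero divisor; as $\hat g$ is a common divisor of $\Im(\delta)$ we may write $\delta(f_1)=\hat g c$ for some $c\in\A$, and if $\hat g x=0$ then $\delta(f_1)x=c\hat g x=0$ forces $x=0$. Hence $\hat g$ is not a zero divisor, so the assignment $\Delta(a)=\delta(a)/\hat g$ is unambiguously defined: the quotient exists because $\hat g\mid\delta(a)$, and it is unique because $\hat g$ is a non-zero-divisor. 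That $\Delta$ is a $\sigmatau$-derivation is exactly the computation carried out immediately before the statement, namely $\Delta=X_1$ in the notation there.

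Next I would prove that $\Delta$ generates. Let $X\in\mbox{Der}_{\sigmatau}(\A)$ be arbitrary. Because $\hat g\in\Im(\delta)$, pick $g_0\in\A$ with $\delta(g_0)=\hat g$. As $\hat g=\delta(g_0)$ divides $\delta(f)$ for every $f\in\A$, it also divides $\delta(f)X(g_0)$, so Proposition~\ref{prop X der form} applies and yields
\[
X(f)=\frac{X(g_0)\delta(f)}{\delta(g_0)}=X(g_0)\,\frac{\delta(f)}{\hat g}=X(g_0)\Delta(f)
\]
for all $f\in\A$. Thus $X=X(g_0)\cdot\Delta$, so every $\sigmatau$-derivation lies in $\A\cdot\Delta$ and $\Delta$ is a generating element.

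It then remains to verify freeness, i.e.\ that the annihilator of $\Delta$ is trivial. I would suppose $f_0\cdot\Delta=0$, so $f_0\Delta(a)=0$ for all $a\in\A$, and test this on $a=f_1$. The relation $\hat g\,\Delta(f_1)=\delta(f_1)$ shows $\Delta(f_1)$ is itself a non-zero-divisor (if $\Delta(f_1)x=0$ then $\delta(f_1)x=\hat g\Delta(f_1)x=0$, whence $x=0$). Therefore $f_0\Delta(f_1)=0$ forces $f_0=0$. Combining this with the previous paragraph shows that $\mbox{Der}_{\sigmatau}(\A)$ is free of rank one with basis $\delta/\hat g$.

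The main obstacle, and the reason both hypotheses are needed, is that $\A$ need not be a domain, so one cannot divide or cancel freely. The crux is to convert regularity into the single usable fact that $\hat g$ is a non-zero-divisor; this simultaneously makes $\delta/\hat g$ well-defined and supplies the cancellations in both the generating and the freeness steps. The hypothesis $\hat g\in\Im(\delta)$ is what produces the element $g_0$ with $\delta(g_0)=\hat g$ needed to invoke Proposition~\ref{prop X der form}; without it one could still form $\delta/\hat g$, but would have no evaluation point at which to pin down the coefficient $X(g_0)$.
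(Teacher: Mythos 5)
Your proof is correct and follows essentially the same route as the paper: the paper applies Lemma~\ref{lemma:Xd.dX} directly to get $X(f)\hat{g}=\delta(f)X(g_0)$ with $\delta(g_0)=\hat{g}$ and then cancels $\hat{g}$, which is exactly what your invocation of Proposition~\ref{prop X der form} amounts to, and the freeness step via regularity is identical. Your explicit verification that $\hat{g}$ is a non-zero-divisor (making $\delta/\hat{g}$ well-defined and justifying the cancellation) is a welcome extra precision, but not a different argument.
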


\begin{proof}
  Let $g_0\in\A$ such that $\delta(g_0)=\hat{g}$. Lemma~\ref{lemma:Xd.dX} implies that
  \begin{align*}
    X(f)\hat{g}=\delta(f)X(g_0)\implies
    X(f) = X(g_0)\frac{\delta(f)}{\hat{g}} 
  \end{align*}
  since $\hat{g}$ is a common divisor of the image of $\delta$, which
  shows that $\delta/\hat{g}$ generates
  $\mbox{Der}_{\sigmatau}(\A)$. Now, assuming $f_0\delta(f)/\hat{g}=0$
  for all $f\in\A$ implies that $f_0\delta(f)=0$ for all $f\in\A$
  which gives $f_0=0$ since $\sigmatau$ is assumed to be a regular
  pair. Hence, $\delta/\hat{g}$ is a basis for $\mbox{Der}_{\sigmatau}(\A)$.
\end{proof}

 \noindent
 The set
 $M_{\hat{g}} = \{f\frac{\delta}{\hat{g}}: f\in\A\}$ is a collection
 of symmetric $\sigmatau$-derivations of $\A$ and the pair
 $(\A, M_{\hat{g}})$ is a symmetric $\sigmatau$-algebra. Let
 $\Sigma_{\hat{g}} = (\A, M_{\hat{g}})$ and let $\TSigma_{\hat{g}}$ be the
 tangent space generated by $M_{\hat{g}}$.
 It is easy to see that $\TSigma_{\hat{g}}=M_{\hat{g}}$ since
 \begin{align*}
   X_f + X_g = (f+g)\frac{\delta}{\hat{g}} = X_{f+g}.
 \end{align*}
 
\noindent
Moreover, the vector space $\TSigma_{\hat{g}}$ can be endowed with an $A$-module structure by defining 
\begin{equation*}
  f\cdot X_{g} = X_{fg}
\end{equation*}
for $f, g \in \A.$ Indeed, if $f_{1}, f_{2} \in \A$ and $X_{g} \in \TSigma_{\hat{g}},$ then one has 
\begin{eqnarray*}
	(f_{1} + f_{2}) \cdot X_{g} = X_{(f_{1} + f_{2})g} = X_{f_{1}g + f_{2}g} = X_{f_{1}g} + X_{f_{2}g} = f_1\cdot X_{g} + f_{2}\cdot X_{g}
\end{eqnarray*}
and 
\begin{eqnarray*}
	f_{1}\cdot(f_{2}\cdot X_{g}) = f_{1}\cdot X_{f_{2}g} = X_{f_{1}(f_{2}g)} = X_{(f_{1}f_{2})g} = (f_{1}f_{2})\cdot X_{g},
\end{eqnarray*}
as well as
\begin{eqnarray*}
	f\cdot(X_{g_{1}} + X_{g_{2}}) = f\cdot X_{g_{1} + g_{2}} = X_{fg_{1} + fg_{2}} = f\cdot X_{g_{1}} + f\cdot X_{g_{2}}.
\end{eqnarray*} 
for $g_1,g_2\in\A$.

Thus, $\TSigma_{\hat{g}}$ is an $\A$-module and it consists of
$\sigmatau$-derivations of $\A$ and it is
a submodule of the module of
$\sigmatau$-derivations $\mbox{Der}_{\sigmatau}(\A)$.
The next result shows that if $\sigmatau$ is a regular pair, then
$\delta/\hat{g}$ is a basis for $\TSigma_{\hat{g}}$.

\begin{proposition}\label{prop regular}
  Let $\sigmatau$ be a regular pair of endomorphisms on a commutative
  algebra $\A$ and let $\Sigma_{\hat{g}} = (\A, M_{\hat{g}})$ where
  $\hat{g}$ is a common divisor of $\delta(\A).$ Then
  $\frac{\delta}{\hat{g}}$ is a basis of $\TSigma_{\hat{g}}.$
\end{proposition}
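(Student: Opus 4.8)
The plan is to verify the two defining properties of a one-element basis for $\delta/\hat{g}$: that it generates $\TSigma_{\hat{g}}$ as an $\A$-module, and that it is torsion-free, i.e.\ that $f\cdot(\delta/\hat{g})=0$ forces $f=0$. The generating property requires no work beyond unwinding the definitions established just above the statement: by construction $\TSigma_{\hat{g}}=M_{\hat{g}}=\{X_{f}:f\in\A\}$, and since $\delta/\hat{g}=X_{1}$ the $\A$-action gives $X_{f}=f\cdot X_{1}=f\cdot(\delta/\hat{g})$. Hence every element of $\TSigma_{\hat{g}}$ is already displayed as an $\A$-multiple of $\delta/\hat{g}$, so $\delta/\hat{g}$ spans. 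All the content is therefore in the torsion-free part.

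For the torsion-free part I would argue as follows. Suppose $f\cdot(\delta/\hat{g})=X_{f}=0$ as a map $\A\to\A$; this means $f\,\delta(a)/\hat{g}=0$ for every $a\in\A$. Since $\sigmatau$ is a regular pair, the definition of regularity supplies an element $a_{0}\in\A$ for which $\delta(a_{0})$ is not a zero divisor. Evaluating the vanishing relation at $a=a_{0}$ and multiplying by $\hat{g}$ (using that $\hat{g}\cdot\delta(a_{0})/\hat{g}=\delta(a_{0})$ and commutativity of $\A$) yields $f\,\delta(a_{0})=0$. Because $\delta(a_{0})$ is not a zero divisor, this forces $f=0$. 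Thus the annihilator of $\delta/\hat{g}$ is trivial and the single element $\delta/\hat{g}$ is both spanning and torsion-free, hence a basis.

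I expect the only genuinely delicate point to be the same one that underlies the preceding proposition, namely the interplay between regularity and the common divisor $\hat{g}$: one should note that the regularity hypothesis is exactly what makes the quotient $\delta(a)/\hat{g}$ well defined, since $\hat{g}$ divides the non-zero-divisor $\delta(a_{0})$ and hence $\hat{g}$ itself is a non-zero-divisor (in a commutative ring a factor of a non-zero-divisor is again a non-zero-divisor), so division by $\hat{g}$ is unambiguous. Everything else is a short computation, and no obstacle of substance arises beyond selecting the non-zero-divisor $\delta(a_{0})$ furnished by regularity and clearing the denominator.
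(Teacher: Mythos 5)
Your proof is correct and follows essentially the same route as the paper: the paper likewise takes the spanning property for granted from the preceding construction and reduces everything to showing $f\,\delta/\hat{g}=0\Rightarrow f=0$ by picking the non-zero-divisor $\delta(a_0)$ supplied by regularity and clearing the denominator $\hat{g}$ (the paper phrases this as a short contradiction argument showing $\delta(a_0)/\hat{g}$ is not a zero divisor, but the computation is identical). Your added remark that $\hat{g}$ is itself a non-zero-divisor, making the quotient unambiguous, is a small point the paper leaves implicit.
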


\begin{proof}
  We would like to show that if $f\frac{\delta}{\hat{g}} = 0$ then
  $f = 0$. Since $\sigmatau$ is a regular pair, there exist
  $a_{0}\in \A$ such that $\delta(a_{0})$ is not a zero divisor. Let
  us first show that if $\delta(a_{0})$ is not a zero divisor then
  $\frac{\delta(a_{0})}{\hat{g}}$ is not a zero divisor. For the sake
  of argument, assume that $\frac{\delta(a_0)}{\hat{g}}$ is a zero
  divisor. Then there exist $f_{0}\in\A$ such that
  $f_{0}\frac{\delta(a_0)}{\hat{g}} = 0$, which implies
  \begin{eqnarray}\label{zero div}
    \hat{g}f_{0}\frac{\delta(a_0)}{\hat{g}} = 0 \implies f_{0}\delta(a_0) = 0
  \end{eqnarray}
  contradicting the fact that $\delta(a_0)$ is not a zero
  divisor. Hence $\frac{\delta(a_{0})}{\hat{g}}$ is not a zero
  divisor. It follows that
  \begin{eqnarray*}
    f\frac{\delta(a)}{\hat{g}} = 0,
    \quad\forall\, a \in \A \implies f\frac{\delta(a_{0})}{\hat{g}} = 0 \implies f = 0,	
  \end{eqnarray*}
  showing that $\frac{\delta}{\hat{g}}$ is indeed a basis for $\TSigma_{\hat{g}}.$
\end{proof}

\noindent
Next, let us explore the possibility of defining a $\sigmatau$-Lie
algebra structure on $\TSigma_{\hat{g}}$.

\begin{proposition}\label{prop sigmatau Lie alg}
  Let $\sigma\in\Aut(\A)$ and assume that $(\sigma,\Id)$ is a regular
  pair of endomorphisms. If
  $R:\TSigma_{\hat{g}}\otimes_{\complex} \TSigma_{\hat{g}}\to
  \TSigma_{\hat{g}} \otimes_{\complex} \TSigma_{\hat{g}}$ is defined
  by
  \begin{eqnarray*}
    R(X_{f} \otimes_{\complex} X_{g}) = X_{\sigma(g)}\otimes_{\complex} X_{\sigma^{-1}(f)}
  \end{eqnarray*}
  for $X_{f}, X_{g} \in \TSigma_{\hat{g}}$ then
  $(\TSigma_{\hat{g}}, R)$ is a $\sigmatau$-Lie algebra.
\end{proposition}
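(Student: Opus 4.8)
The plan is to verify the two defining conditions of a $\sigmatau$-Lie algebra directly from the definition. First I would check that $R$ is well-defined and $\complex$-linear on $\TSigma_{\hat{g}}\otimes_{\complex}\TSigma_{\hat{g}}$. Since Proposition~\ref{prop regular} shows $\frac{\delta}{\hat{g}}$ is a basis for $\TSigma_{\hat{g}}$ (using that $(\sigma,\Id)$ is regular), every element is uniquely $X_f=f\frac{\delta}{\hat{g}}$, and the assignment $X_f\otimes_{\complex}X_g\mapsto X_{\sigma(g)}\otimes_{\complex}X_{\sigma^{-1}(f)}$ depends $\complex$-bilinearly on the coefficients $f,g\in\A$, so $R$ extends to a $\complex$-bilinear map as required. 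Here one uses that $\sigma\in\Aut(\A)$ so that $\sigma^{-1}$ makes sense.

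Next I would verify condition (1), namely $R^2=\Id$. Applying $R$ twice gives
\begin{align*}
  R^2(X_f\otimes_{\complex}X_g)
    = R(X_{\sigma(g)}\otimes_{\complex}X_{\sigma^{-1}(f)})
    = X_{\sigma(\sigma^{-1}(f))}\otimes_{\complex}X_{\sigma^{-1}(\sigma(g))}
    = X_f\otimes_{\complex}X_g,
\end{align*}
so $R^2=\Id$ on basis elements and hence everywhere by bilinearity. This step is purely formal and follows from $\sigma\circ\sigma^{-1}=\sigma^{-1}\circ\sigma=\Id$.

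The substantive step is condition (2): I must show that $m(X_f\otimes_{\complex}X_g - R(X_f\otimes_{\complex}X_g))\in\TSigma_{\hat{g}}$, i.e. that the bracket
\begin{align*}
  [X_f,X_g]_R = X_f\circ X_g - X_{\sigma(g)}\circ X_{\sigma^{-1}(f)}
\end{align*}
lies in $\TSigma_{\hat{g}}$. The plan is to compute $X_f\circ X_g$ explicitly. Writing $X_f(a)=f\frac{\delta(a)}{\hat{g}}$ and using that $\A$ is commutative, I would expand $X_f(X_g(a))=f\frac{\delta(g\delta(a)/\hat{g})}{\hat{g}}$, expanding $\delta$ on the product via the $\sigmatau$-twisted Leibniz rule $\delta(uv)=\sigma(u)\delta(v)+\delta(u)\tau(v)$ with $\tau=\Id$. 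The resulting expression will contain a term involving $\sigma(g)$ (from $\sigma$ acting on the left factor) and a term involving $\delta(g)$; the subtraction by the $R$-twisted composition is designed precisely so that the ``second-order'' pieces cancel, leaving a first-order $\sigmatau$-derivation expressible as $X_h$ for some $h\in\A$. The main obstacle I expect is the bookkeeping in this expansion: one must carefully track how $\sigma$ and $\delta$ interact with the division by $\hat g$ (using that $\hat g$ is a common divisor of $\Im(\delta)$, so the quotients lie in $\A$), and confirm that the surviving terms reorganize into a single element of the form $h\frac{\delta}{\hat g}$. Since $\TSigma_{\hat{g}}$ consists precisely of such elements, this will establish membership and complete the verification that $(\TSigma_{\hat g},R)$ is a $\sigmatau$-Lie algebra.
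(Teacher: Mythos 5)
Your proposal is correct and follows essentially the same route as the paper: verify $R^2=\Id$ formally, then expand the compositions via the twisted Leibniz rule (with $\tau=\Id$) and observe that the second-order terms cancel, leaving $[X_f,X_g]_R = X_{h}$ with $h = X_f(g)-X_{\sigma(g)}(\sigma^{-1}(f))$. The only difference is presentational: the paper avoids your worry about tracking the division by $\hat{g}$ by writing $X_f = fX_{\mathds{1}}$ and applying the Leibniz rule directly to the $(\sigma,\Id)$-derivation $X_{\mathds{1}}=\delta/\hat{g}$, which makes the bookkeeping immediate.
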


\begin{proof}
  Let
  $m:\TSigma_{\hat{g}} \otimes_{\complex}\TSigma_{\hat{g}} \to
  \TSigma_{\hat{g}}$ denote the composition
  $m(X_{f}\otimes_{\complex}X_{g}) = X_{f}\circ X_{g}.$ To prove that
  $(\TSigma_{\hat{g}}, R)$ is a $\sigmatau$-Lie algebra, one needs to
  show that
  $R^{2} =
  \mbox{Id}_{\TSigma_{\hat{g}}\otimes_{\complex}\TSigma_{\hat{g}}}$
  and
  \begin{eqnarray}\label{mult TSigma}
    m(X_{f}\otimesc X_{g} - R(X_{f}\otimesc X_{g})) \in \TSigma_{\hat{g}}.
  \end{eqnarray}
  Let us first prove that
  $R^{2} = \mbox{Id}_{\TSigma_{\hat{g}}\otimesc \TSigma_{\hat{g}}}.$
  For $X_{f}, X_{g} \in \TSigma_{\hat{g}},$ one has
  \begin{eqnarray*}
    R^{2}(X_{f}\otimesc X_{g}) = R(X_{\sigma(g)}\otimesc X_{\sigma^{-1}(f)}) = X_{\sigma(\sigma^{-1}(f))}\otimesc X_{\sigma^{-1}(\sigma(g))} = X_{f}\otimesc X_{g}.
  \end{eqnarray*}
  To prove \eqref{mult TSigma}, note that 
  \begin{eqnarray*}
    m(X_{f}\otimesc X_{g})(a) = X_{f}\circ X_{g}(a) = fX_{\mathds{1}}(gX_{\mathds{1}}(a))
  \end{eqnarray*}
  for $a \in \A$.  One has
  \begin{eqnarray*}
    &&m(X_{f}\otimesc X_{g} - R(X_{f} \otimesc X_{g}))(a) = X_{f}\circ X_{g}(a) - X_{\sigma(g)}\circ X_{\sigma^{-1}(f)}(a)\\
    && = fX_{\mathds{1}}(gX_{\mathds{1}}(a)) - \sigma(g)X_{\mathds{1}}(\sigma^{-1}(f)X_{\mathds{1}}(a)) = f\sigma(g)X_{\mathds{1}}(X_{\mathds{1}}(a)) + fX_{\mathds{1}}(g)X_{\mathds{1}}(a)\\
    &&- \sigma(g)\sigma(\sigma^{-1}(f))X_{\mathds{1}}(X_{\mathds{1}}(a)) - \sigma(g)X_{\mathds{1}}(\sigma^{-1}(f))X_{\mathds{1}}(a)\\
    && = fX_{\mathds{1}}(g)X_{\mathds{1}}(a) - \sigma(g)X_{\mathds{1}}(\sigma^{-1}(f))X_{\mathds{1}}(a) \\
    &&= (X_{f}(g) - X_{\sigma(g)}(\sigma^{-1}(f)))X_{\mathds{1}}(a)
       =X_{X_{f}(g) - X_{\sigma(g)}(\sigma^{-1}(f))}(a)
  \end{eqnarray*}
  implying that
  $ m(X_{f}\otimesc X_{g} - R(X_{f}\otimesc X_{g})) \in
  \TSigma_{\hat{g}}.$ Thus, $(\TSigma_{\hat{g}}, R)$ is a
  $\sigmatau$-Lie algebra.
\end{proof}

\noindent
The bracket on the $\sigmatau$-Lie algebra $(\TSigma_{\hat{g}}, R)$ in
the proposition above is given by
\begin{eqnarray*}
  [X_{f}, X_{g}]_{R} = \paraa{X_{f}(g) - X_{\sigma(g)}(\sigma^{-1}(f))} X_{\mathds{1}}
\end{eqnarray*}
for $X_{f}, X_{g} \in \TSigma_{\hat{g}}$.

For arbitrary $\sigma,\tau\in\End(\A)$, $\TSigma_{\hat{g}}$ can be
  given the structure of a $\Sigma_{\hat{g}}$-module. Namely, defining
  $\sigmah,\tauh:\TSigma_{\hat{g}}\to\TSigma_{\hat{g}}$ by
\begin{align*}
  \sigmah(X_f) = X_{\sigma(f)}\quad\text{ and }\quad\tauh(X_f) = X_{\tau(f)}
\end{align*}
one finds that
\begin{align*}
  \sigmah(fX_g) = \sigmah(X_{fg}) = X_{\sigma(f)\sigma(g)}
  =\sigma(f)X_{\sigma(g)} = \sigma(f)\sigmah(X_g)
\end{align*}
showing (together with obvious linearity properties and a similar
computation for $\tauh$) that $(\TSigma,\{\sigmah,\tauh\})$ is a
$\Sigma_{\hat{g}}$-module. Let us now study the curvature of a
connection on the $\Sigma_{\hat{g}}$-module
$(\TSigma,\{\sigmah,\tauh\})$ where $\sigmatau$ is assumed to be a
regular pair of endomorphisms, implying that $\TSigma_{\hat{g}}$ is a
free module. Define
$\nabla: \TSigma_{\hat{g}} \times
\TSigma_{\hat{g}}\to\TSigma_{\hat{g}}$ by
\begin{equation}\label{eq:comm.conn.unique}
  \nabla_{X_{f}}(X_{g}) = fX_{\delta(g)}=f\paraa{\tauh(X_g)-\sigmah(X_g)}
\end{equation}
for $X_{f}, X_{g} \in \TSigma_{\hat{g}}$, and note that for $f,g,h\in\A$
\begin{align*}
  \nabla_{X_f}(gX_{h})
  &=\nabla_{X_f}(X_{gh})
  = fX_{\delta(gh)}
    = fX_{\sigma(g)\delta(h)+\delta(g)\tau(h)}\\
  &= f\sigma(g)X_{\delta(h)}+f\delta(g)X_{\tau(h)}
    = \sigma(g)\nabla_{X_f}(X_h)+X_f(g)\tauh(X_h)\\
  \nabla_{X_f}(gX_{h})
  &=\nabla_{X_f}(X_{gh})
  = fX_{\delta(gh)}
    = fX_{\tau(g)\delta(h)+\delta(g)\sigma(h)}\\
  &= f\tau(g)X_{\delta(h)}+f\delta(g)X_{\sigma(h)}
    = \tau(g)\nabla_{X_f}(X_h)+X_f(g)\sigmah(X_h)  
\end{align*}
showing that $\nabla$ is a symmetric $\sigmatau$-connection. Moreover,
it holds that
\begin{align*}
  \nabla_{fX_{g}}(X_{h})
  =\nabla_{X_{fg}}(X_{h})=fgX_{\delta(h)} = f\nabla_{X_{g}}(X_{h})
\end{align*}
for $f,g,h\in\A$. Note that if $\sigmatau$ is a strongly regular pair,
then the connection defined in \eqref{eq:comm.conn.unique} is the
unique symmetric $\sigmatau$-connection on $\TSigma_{\hat{g}}$
(cf. Corollary~\ref{cor:unique.symmetric.connection}). Let us now
compute the curvature of $\nabla$ in the setting of
Proposition~\ref{prop sigmatau Lie alg}.

\begin{proposition}\label{prop:nabla.linear.curvature.zero}
  Assume that $\Sigma_{\hat{g}} = (\A, M_{\hat{g}})$ is a regular $\sigmatau$-algebra such that
  $(\TSigma_{\hat{g}}, R)$ is a $(\sigma, \mbox{Id})$-Lie algebra with
  $R$ given by
  \begin{eqnarray*}
    R(X_{f}\otimes_{\complex} X_{g}) = X_{\sigma(g)}\otimes_{\complex} X_{\sigma^{-1}(f)}
  \end{eqnarray*}
  for $X_{f}, X_{g} \in \TSigma_{\hat{g}}$ and let
  $(M, \{(\sigmah, \tauh)\})$ be a (left)
  $\Sigma_{\hat{g}}$-module. If
  $\nabla : \TSigma_{\hat{g}} \times M\to M$ is a left
  $(\sigma, \mbox{Id})$-connection such that
  $\nabla_{fX_{g}}(m) = f\nabla_{X_{g}}(m)$ for all $f, g \in \A,$
  then the curvature of $\nabla$ is zero.
\end{proposition}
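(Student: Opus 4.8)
The plan is to collapse all of the connection data into the single $\complex$-linear operator $D := \nabla_{X_{\mid}} : M\to M$. Since $\TSigma_{\hat{g}}$ is the rank-one $\A$-module with action $f\cdot X_g = X_{fg}$, every generator is $X_f = f\cdot X_{\mid}$, and so the hypothesis $\nabla_{fX_g}(m)=f\nabla_{X_g}(m)$ specializes (with $g=\mid$) to $\nabla_{X_f}(m)=fD(m)$ for all $f\in\A$, $m\in M$. Because $X_{\mid}$ is the $(\sigma,\Id)$-derivation with $X_{\mid}(h)=\delta(h)/\hat{g}$ and $\nabla$ is a $(\sigma,\Id)$-connection (so that the relevant module map is $\tauh=\Id$), the twisted Leibniz rule for $D$ reads
\[
  D(hm) = \sigma(h)D(m) + \frac{\delta(h)}{\hat{g}}\,m .
\]
Thus the entire computation is governed by $D$ and multiplication by algebra elements.

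Next I would recall, from the definition of curvature together with the form of $R$ and the bracket established in Proposition~\ref{prop sigmatau Lie alg}, that
\[
  \curv(X_f,X_g)m = \nabla_{X_f}\nabla_{X_g}m - \nabla_{X_{\sigma(g)}}\nabla_{X_{\sigma^{-1}(f)}}m - \nabla_{[X_f,X_g]_R}m,
\]
where $[X_f,X_g]_R = \paraa{X_f(g)-X_{\sigma(g)}(\sigma^{-1}(f))}X_{\mid}$. Expanding the first two terms via $\nabla_{X_f}=fD$ and the Leibniz rule for $D$ gives $fD(gD(m)) = f\sigma(g)D^2(m) + f\tfrac{\delta(g)}{\hat{g}}D(m)$ and, using $\sigma(\sigma^{-1}(f))=f$, the identity $\sigma(g)D(\sigma^{-1}(f)D(m)) = \sigma(g)fD^2(m) + \sigma(g)\tfrac{\delta(\sigma^{-1}(f))}{\hat{g}}D(m)$.

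The key step is the cancellation. Subtracting the two expressions, the second-order contributions $f\sigma(g)D^2(m)$ and $\sigma(g)fD^2(m)$ coincide because $\A$ is commutative, hence cancel; what survives is exactly $\paraa{X_f(g)-X_{\sigma(g)}(\sigma^{-1}(f))}D(m)$, since $X_f(g)=f\delta(g)/\hat{g}$ and $X_{\sigma(g)}(\sigma^{-1}(f))=\sigma(g)\delta(\sigma^{-1}(f))/\hat{g}$. But this is precisely $\nabla_{[X_f,X_g]_R}m$, because $\nabla_{cX_{\mid}}=cD$, so the third term removes the remaining first-order piece and $\curv(X_f,X_g)m=0$.

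The main obstacle, and really the whole content of the argument, is verifying that the first-order terms produced by the Leibniz rule reassemble exactly into the bracket $[X_f,X_g]_R$; this is where both the specific form of $R$ from Proposition~\ref{prop sigmatau Lie alg} and the commutativity of $\A$ (to annihilate the second-order $D^2$ terms) are indispensable. Everything else is routine bookkeeping with $\sigma$ and $\sigma^{-1}$, and I would only note at the outset that $\sigma\in\Aut(\A)$ is what makes $\sigma^{-1}(f)$ meaningful and that the linearity hypothesis $\nabla_{fX_g}=f\nabla_{X_g}$ is exactly what reduces the connection to the single operator $D$.
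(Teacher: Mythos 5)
Your proposal is correct and follows essentially the same route as the paper's proof: both reduce every $\nabla_{X_f}$ to $f\nabla_{X_{\mathds{1}}}$ via the linearity hypothesis, apply the twisted Leibniz rule so that the second-order terms cancel by commutativity of $\A$, and identify the surviving first-order terms with $\nabla_{[X_f,X_g]_R}$. Introducing the operator $D=\nabla_{X_{\mathds{1}}}$ is only a notational repackaging, and your implicit use of $\tauh=\Id$ matches the paper's own treatment.
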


\begin{proof}
  For $f,g\in\A$ one computes
\begin{align*}
\operatorname{Curv}(X_{f}, X_{g})(m) &= \nabla_{X_{f}}(\nabla_{X_{g}}(m)) - \nabla_{X_{\sigma(g)}}(\nabla_{X_{\sigma^{-1}(f)}}(m)) - \nabla_{[X_{f}, X_{g}]_{R}}(m)\\
&= \nabla_{X_{f}}(g\nabla_{X_{\mathds{1}}}(m)) - \nabla_{X_{\sigma(g)}}(\sigma^{-1}(f)\nabla_{X_{\mathds{1}}}(m)) - \nabla_{[X_{f}, X_{g}]_{R}}(m)\\
& = \sigma(g)\nabla_{X_{f}}(\nabla_{X_{\mathds{1}}}(m)) + X_{f}(g)\nabla_{X_{1}}(m) - \sigma(\sigma^{-1}(f))\nabla_{X_{\sigma(g)}}(\nabla_{X_{\mathds{1}}}(m))\\
& \qquad- X_{\sigma(g)}(\sigma^{-1}(f))\nabla_{X_{\mathds{1}}}(m) - \nabla_{[X_{f}, X_{g}]_{R}}(m)\\
&=  \sigma(g)f\nabla_{X_{\mathds{1}}}(\nabla_{X_{\mathds{1}}}(m)) + fX_{\mathds{1}}(g)\nabla_{X_{1}}(m) - f\sigma(g)\nabla_{X_{\mathds{1}}}(\nabla_{X_{\mathds{1}}}(m))\\
& \qquad- \sigma(g)X_{\mathds{1}}(\sigma^{-1}(f))\nabla_{X_{\mathds{1}}}(m) 
 - \nabla_{[X_{f}, X_{g}]_{R}}(m)\\
 &= (fX_{\mathds{1}}(g) - \sigma(g)X_{\mathds{1}}(\sigma^{-1}(f)))\nabla_{X_{\mathds{1}}}(m) -  \nabla_{[X_{f}, X_{g}]_{R}}(m)\\
 & = \nabla_{[X_{f}, X_{g}]_{R}}(m) - \nabla_{[X_{f}, X_{g}]_{R}}(m)= 0,
\end{align*}
showing that the curvature is indeed zero.
\end{proof}

\noindent
In particular, Proposition~\ref{prop:nabla.linear.curvature.zero}
implies that the curvature of the $\sigmatau$-connection on
$\TSigma_{\hat{g}}$ given in \eqref{eq:comm.conn.unique} is zero.  

We remark that not all connections on $\TSigma_{\hat{g}}$-module have
zero curvature. For example, the curvature of the connection defined
by $\nabla_{X_{f}}(X_{\mathds{1}}) = \gamma(f)\cdot X_{\mathds{1}}$
for $\mathbb{K}$-linear map $\gamma : \A\to\A,$ is
\begin{align*}
  \curv(&X_{f}, X_{g})X_{\mathds{1}}
    = \left(\sigma(\gamma(g))\gamma(f) - \sigma(\gamma(\sigma^{-1}(f)))\gamma(\sigma(g))\right)\cdot X_{\mathds{1}}\\
    &+\left(X_{f}(\gamma(g)) - X_{\sigma(g)}(\gamma(\sigma^{-1}(f)))\right)\cdot X_{\mathds{1}} - \gamma(X_{f}(g) - X_{\sigma(g)}(\sigma^{-1}(f)))\cdot X_{\mathds{1}}. 
\end{align*}

\bibliographystyle{alpha}
\bibliography{references}

\end{document}